\numberwithin{equation}{section}
\crefname{equation}{Eq.}{Eqs.}
\crefname{eqnarray}{Eq.}{Eqs.}
\crefname{algo}{Algorithm}{Algorithms}
\crefname{conj}{Conjecture}{Conjectures}
\crefname{defn}{Definition}{Definitions}
\crefname{lem}{Lemma}{Lemmas}
\crefname{thm}{Theorem}{Theorems}
\crefname{claim}{Claim}{Claims}
\crefname{rmk}{Remark}{Remarks}
\crefname{prop}{Proposition}{Propositions}
\crefname{section}{Section}{Sections}
\crefname{appendix}{Appendix}{Appendices}
\crefname{cor}{Corollary}{Corollaries}
\crefname{figure}{Figure}{Figures}
\crefname{table}{Table}{Tables}
\crefname{example}{Example}{Examples}
\crefname{prob}{Problem}{Problems}
\crefname{assm}{Assumption}{Assumptions}
\newcommand{\de}{{\partial}}
\newcommand{\re}{\mathrm{e}}
\newcommand{\bbN}{\mathbb{N}}
\newcommand{\bbZ}{\mathbb{Z}}
\newcommand{\bbR}{\mathbb{R}}
\newcommand{\bbC}{\mathbb{C}}
\newcommand{\bbP}{\mathbb{P}}
\newcommand{\bbQ}{\mathbb{Q}}
\def\bary{\begin{array}} 
\def\eary{\end{array}} 
\def\ben{\begin{enumerate}} 
\def\een{\end{enumerate}}
\def\bit{\begin{itemize}} 
\def\eit{\end{itemize}}
\def\nn{\nonumber} 
\newcommand{\cO}{\mathcal{O}}
\newcommand{\cC}{\mathcal{C}}
\newcommand{\cI}{\mathcal{I}}
\newcommand{\cX}{\mathcal{X}}
\def\beq{\begin{equation}}                     %
\def\eeq{\end{equation}}                       %
\def\bea{\begin{eqnarray}}                     
\def\eea{\end{eqnarray}}
\def\bary{\begin{array}} 
\def\eary{\end{array}} 
\def\ben{\begin{enumerate}} 
\def\een{\end{enumerate}}
\def\bit{\begin{itemize}} 
\def\eit{\end{itemize}}
\def\nn{\nonumber} 
\def\de {\partial}
\def\IP{{\mathbb P}}
\def\a{\alpha}
\theoremstyle{plain}
\newtheorem{thm}{Theorem}[section]
\newtheorem{lem}[thm]{Lemma}
\newtheorem{prop}[thm]{Proposition}
\newtheorem{conj}[thm]{Conjecture}
\newtheorem*{conj*}{Conjecture}
\newtheorem{cor}[thm]{Corollary}
\newtheorem*{cor*}{Corollary}
\newtheorem{defn}{Definition}[section]
\theoremstyle{definition}
\newtheorem{rem}[thm]{Remark}
\newtheorem{rmk}[thm]{Remark}
\DeclareMathOperator{\Hom}{Hom}
\newcommand{\GITl}[1]{\backslash \!\! \backslash _{\kern-.2em #1 \kern0.1em}}
\newcommand{\GIT}[1]{/\!\!/_{\kern-.2em #1 \kern0.1em}}
\renewcommand{\l}{\left}
\renewcommand{\r}{\right}
\newcommand{\bra}{\left\langle}
\newcommand{\ket}{\right\rangle}
\newcommand{\ev}{\operatorname{ev}}
\def\bred{\begin{color}{red}}
\def\ered{\end{color}}
\def\bes{\begin{subequations}}
\def\ees{\end{subequations}}
\newcommand\NN{\mathbb N}
\newcommand\PP{\mathbb P}
\newcommand\Q{\mathbb Q}
\newcommand\vdim{\operatorname{vdim}}
\newtheorem{lemma-definition}[theorem]{Lemma-Definition}
\theoremstyle{definition}
\theoremstyle{remark}
\numberwithin{equation}{section}
\numberwithin{figure}{section}
\newcommand{\ZZ} {\mathbb{Z}}
\newcommand{\QQ} {\mathbb{Q}}
\newcommand{\RR} {\mathbb{R}}
\newcommand {\shO}  {\mathcal{O}}
\newcommand {\Mult} {\operatorname{Mult}}
\newcommand {\ol} {\overline}
\DeclareMathOperator {\hhh} {H}
\def\mydate{\ifcase\month \or January\or February\or March\or
April\or May\or June\or July\or August\or September\or October\or 
November\or December\fi \space\number\day,\space\number\year}
\newcommand{\pone}{\IP^1}
\DeclareMathOperator{\tot}{Tot}
\DeclareMathOperator{\mmm}{M}
\DeclareMathOperator{\obstr}{Ob}
\newcommand{\tpd}{\text{T}(\mathfrak{p})^X_{\mathsf{d}}}
\newcommand{\tqd}{\text{T}(\mathfrak{q})^X_{\mathsf{d}}}
\begin{document}

\title{
On the log-local principle for the toric boundary}

\author{Pierrick Bousseau}

\address{\tiny
Universit\'e Paris-Saclay, CNRS, Laboratoire de mathématiques d'Orsay, 91405, Orsay, France}


\email{pierrick.bousseau@u-psud.fr}


\author{Andrea Brini}
\address{\tiny 
University of Sheffield, School of Mathematics and Statistics, S11 9DW, Sheffield, United Kingdom\\
On leave from CNRS, DR 13, Montpellier, France}
\email{a.brini@sheffield.ac.uk}

\author{Michel van Garrel}

\address{\tiny University of Birmingham, School of Mathematics, B15 2TT, Birmingham, United Kingdom}
\email{m.vangarrel@bham.ac.uk}

\thanks{This project has been supported by the European Union's Horizon
  2020 research and innovation programme under the Marie Sklodowska-Curie
  grant agreement No 746554 (M.~vG.), the Engineering and Physical Sciences
  Research Council under grant agreement ref.~EP/S003657/2 (A.~B.) and by Dr.\ Max R\"ossler, the Walter Haefner Foundation and the ETH Z\"urich Foundation (P.~B. and M.~vG.).}

\begin{abstract}
Let $X$ be a smooth projective complex variety and let $D=D_1+\cdots+D_l$ be a
reduced
normal crossing divisor on $X$ with each
component $D_j$ smooth, irreducible, and nef. The log-local
principle put forward in \cite{vGGR} conjectures that the genus 0 log
Gromov--Witten theory of maximal tangency of $(X,D)$ is equivalent to the
genus 0 local Gromov--Witten theory of $X$ twisted by
$\bigoplus_{j=1}^l\shO(-D_j)$. We prove that an extension of the log-local
principle holds for $X$ a (not necessarily smooth) $\mathbb{Q}$-factorial projective toric variety, $D$ the toric boundary, and descendent point insertions.
\end{abstract}

\maketitle

\section{Introduction}

Let $X$ be a smooth projective complex variety of dimension $n$ and let
$D=D_1+\cdots+D_l$ be an effective reduced normal crossing divisor with each component $D_j$ smooth, irreducible and
nef. We can then consider two, a priori very different, geometries associated to the
pair $(X,D)$:
\begin{itemize}
\item the $n$-dimensional \emph{log geometry} of the pair $(X,D)$, 
\item the $(n+l)$-dimensional \emph{local geometry} of the total space $\tot\left(\bigoplus_{j=1}^l\shO_X(-D_j)\right)$.
\end{itemize}

The genus zero log Gromov--Witten  invariants of $(X,D)$ virtually count {rational curves}
\[
f : \pone \to X
\]
of a fixed degree $f_*[\pone]\in\hhh_2(X,\ZZ)$,
with {insertions}, such as {passing through a number of general points},
and with prescribed intersections with $D$.
Such an $f$ is said to be \emph{of maximal tangency} if $f(\pone)$ meets each $D_j$ in only one point of full tangency.
On the other hand, the local Gromov-Witten theory of
$\tot\left(\bigoplus_{j=1}^l\shO_X(-D_j)\right)$ is a way to study the
local contribution of $X$ to the enumerative geometry of a compact
$(n+l)$-dimensional variety  $Y$ containing $X$ with normal bundle $\bigoplus_{j=1}^l\shO_X(-D_j)$.

The existence of a relation between the log and the local theory of $(X,D)$
was introduced by the \emph{log-local principle} of \cite[Conjecture 1.4]{vGGR}:

\begin{conj}\label{conj:vGGR}
Let $\mathsf{d}$ be an effective curve class such that $\mathsf{d}\cdot D_j>0$ for all 
$1 \leq j \leq l$. After dividing by $\prod_{j=1}^l (-1)^{\mathsf{d}\cdot D_j +1}\, \mathsf{d}\cdot D_j$, the genus 0 log Gromov-Witten invariants of maximal tangency and class $\mathsf{d}$ of $(X,D)$ equal the genus 0 local Gromov-Witten invariants of class $\mathsf{d}$ of $\tot\left(\bigoplus_{j=1}^l\shO_X(-D_j)\right)$ (with the same insertions).

\end{conj}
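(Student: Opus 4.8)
The plan is to establish the toric boundary case stated in the abstract by combining two ingredients: the log--local correspondence for a single smooth nef divisor, which may be used as a black box or re-derived as the $l=1$ base case \cite{vGGR}, and explicit computations of the ``bubble'' contributions that produce the combinatorial prefactor $\prod_{j=1}^l(-1)^{\mathsf d\cdot D_j+1}\,\mathsf d\cdot D_j$. The point is that all the auxiliary geometry is toric, which is what makes the bookkeeping feasible.

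\emph{Step 1: compactify the local geometry inside a toric variety.} Write $Y=\tot\!\big(\bigoplus_{j=1}^l\shO_X(-D_j)\big)$ and let $\overline Y=\prod_{j=1}^l\mathbb{P}_X\!\big(\shO_X\oplus\shO_X(-D_j)\big)$ be the fiberwise projective completion in each summand. Since $X$ is $\mathbb{Q}$-factorial toric and the $D_j$ are the toric boundary divisors, $\overline Y$ is again a $\mathbb{Q}$-factorial projective toric variety of dimension $n+l$; it contains $Y$ as the complement of the $l$ toric divisors $E_j=\{z_j=\infty\}$, and it contains a copy of $X$ (the locus $z_1=\dots=z_l=0$) with normal bundle $\bigoplus_j\shO_X(-D_j)$. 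Because $\mathsf d\cdot D_j>0$ for all $j$, the bundle $f^*\shO_X(-D_j)$ has negative degree along any image curve of class $\mathsf d$, so genus $0$ stable maps of class $\mathsf d$ to $Y$ cannot have components mapping into a fiber direction; a standard argument then identifies the genus $0$ local invariants of $Y$ with descendent point insertions, up to explicit combinatorial factors, with the genus $0$ log invariants of $\big(\overline Y,\sum_{j=1}^l E_j\big)$ of maximal tangency with the same insertions.

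\emph{Step 2: degenerate the fiber directions to reach $(X,D)$.} Apply deformation to the normal cone of $X\subset\overline Y$ one $\mathbb{P}^1$-factor at a time (equivalently, a single toric degeneration of $\overline Y$). Degenerating in the $j$-th direction replaces $\overline Y$ by $\overline Y\cup_X P_j$, where $P_j$ is a toric $\mathbb{P}^1$-bundle over $X$; after $l$ such steps and an application of the log degeneration formula, in the form of the decomposition formula of Abramovich--Chen--Gross--Siebert, the log invariants of $\overline Y$ become a sum over splittings of a stable log map into a ``main'' log map to $(X,D)$ of class $\mathsf d$ and maximal tangency, glued along the contact points to genus $0$ log maps into (rubber) $\mathbb{P}^1$-bundles over the $D_j$. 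A dimension and rigidity count, using that the $P_j$ are toric $\mathbb{P}^1$-bundles, forces the only surviving configurations to be the expected ones and places all descendent point insertions on the main component --- a general point of $X$ meets only that component, so its marked point and its $\psi$-class live there. The $j$-th bubble then contributes a rubber/local integral over a moduli space of genus $0$ maps to a $\mathbb{P}^1$-bundle over $D_j$ that is independent of the ambient geometry, and this evaluates to $(-1)^{\mathsf d\cdot D_j+1}/(\mathsf d\cdot D_j)$ exactly as in the $l=1$ case; assembling the bubbles with the factors from Step 1 produces the prefactor $\prod_{j=1}^l(-1)^{\mathsf d\cdot D_j+1}\,\mathsf d\cdot D_j$ and the asserted equality. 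The $\mathbb{Q}$-factorial hypothesis is handled by running the whole argument with $X$ and $\overline Y$ replaced by the corresponding toric Deligne--Mumford stacks (or on the smooth locus, which carries all maps of class $\mathsf d$ through general points), where orbifold Gromov--Witten theory, log Gromov--Witten theory, and the degeneration formula are all available.

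\emph{Main obstacle.} The crux is Step 2: the log degeneration/decomposition formula does not split descendent insertions as cleanly as primary ones, so one must check stratum by stratum that the $\psi$-classes attached to the point insertions can be rigidified onto the main component and that the bubbled and rubber pieces contribute no extra $\psi$-terms. This is precisely where the toric hypothesis is essential --- the explicit description of the strata of the degeneration and the rigidity of the toric $\mathbb{P}^1$-bundles make this analysis, together with the identification of the rubber integrals, tractable.
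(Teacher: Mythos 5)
Your strategy --- compactify the local geometry, then degenerate to the normal cone and apply the decomposition formula --- is the natural multi-divisor generalisation of the $l=1$ proof in \cite{vGGR}, and it is essentially the geometric route taken by \cite{NR}, not the route taken in this paper. The gap is concentrated exactly where you flag it, and it is not a technical loose end but the mathematical heart of the problem. For $l\geq 2$ the decomposition formula for the iterated degeneration is a sum over rigid tropical types whose combinatorics is qualitatively harder than in the smooth-divisor case: there are strata in which components of the curve are distributed among several of the bubbles $P_j$, and in which the total contact order $\mathsf{d}\cdot D_j$ is split among several contact points before being reassembled, and these do \emph{not} drop out by a dimension count alone. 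Indeed, it is precisely the non-vanishing of such configurations that makes the naive log-local principle fail for general simple normal crossings divisors, so the assertion that ``rigidity forces the only surviving configurations to be the expected ones'' cannot be established by a formal argument; it requires either special geometric input (as in \cite{NR}, where each $D_j$ is a hyperplane section of a smooth $X$ and one reduces to the smooth-pair case by blow-ups) or an actual computation of the contributing strata. The situation is further complicated here because the invariants in question carry descendent insertions ($\psi^{n_X+r_X-2}$ at one point, or $\psi^{r_X-1}$ at one of two points) --- without them the invariants vanish for dimension reasons --- and the compatibility of $\psi$-classes with the decomposition formula is an additional unproved step that your own ``main obstacle'' paragraph concedes. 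There is also a smaller unproved claim in Step 1: for a curve class $\mathsf{d}$ in the zero section one has $\mathsf{d}\cdot E_j=0$, so the comparison between local invariants of $Y$ and log invariants of $\bigl(\overline{Y},\sum_j E_j\bigr)$ with zero contact orders must rule out stable log maps with components mapping into the $E_j$, which again needs an argument rather than an appeal to negativity of $f^*\shO_X(-D_j)$.

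For contrast, the paper proves its toric case of the conjecture by a completely different, computational route that sidesteps the degeneration formula entirely. On the log side, \cref{classtoric} reduces $X$ to a product of fake weighted projective spaces, and the tropical correspondence theorem of \cite{ManRu} together with the multiplicity algorithm of \cite{ManRu19} shows that each relevant tropical moduli space contains a single rigid tropical curve whose multiplicity is computed explicitly (\cref{prop:tpd,prop:tqd}), giving $R\mathfrak{p}^X_{\mathsf{d}}$ and $R\mathfrak{q}^X_{\mathsf{d}}$ in closed form. On the local side, the equivariant mirror theorem for toric Deligne--Mumford stacks (\cref{thm:I=J}) with trivial mirror map gives $\mathfrak{p}^X_{\mathsf{d}}$ directly from the $I$-function, and a small-to-big quantum cohomology reconstruction (\cref{lem:recons}) gives $\mathfrak{q}^X_{\mathsf{d}}$. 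The log-local principle is then read off by comparing the two closed formulas. If you want to pursue your degeneration approach, the substantive task is to carry out the stratum-by-stratum analysis of the decomposition formula with descendants in the toric setting; as written, the proposal asserts rather than proves the key vanishing.
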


\begin{thm}[\cite{vGGR}] \label{thm:vGGR}

The log-local principle holds if $X$ is a smooth projective variety and $D$ is smooth and nef.

\end{thm}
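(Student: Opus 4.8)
The plan is to reduce both sides to a common local computation over the divisor by a deformation-to-the-normal-cone argument, and to extract the factor $(-1)^{\mathsf{d}\cdot D+1}\,\mathsf{d}\cdot D$ from an explicit rubber integral over a $\pone$-bundle. Since $D$ is smooth and irreducible (so $l=1$; set $m:=\mathsf{d}\cdot D>0$), I would first invoke the comparison between genus $0$ maximal-tangency log Gromov--Witten invariants of $(X,D)$ and J.~Li's genus $0$ relative Gromov--Witten invariants of $(X,D)$ with a single relative marked point of contact order $m$, so that one may work with relative invariants throughout; on the local side one uses the usual twisted (Euler-class) description $N^{\mathrm{loc}}_{0,\mathsf{d}}\bigl(\tot(\shO_X(-D))\bigr)=\int_{[\overline{M}_{0,n}(X,\mathsf{d})]^{\vir}}e\bigl(R^1\pi_*f^*\shO_X(-D)\bigr)\prod_i\ev_i^*[\mathrm{pt}]$, the integrand being the obstruction to deforming a map into the zero section of $P:=\mathbb{P}(\shO_X\oplus\shO_X(-D))$, whose zero section is a copy of $X$ with normal bundle $\shO_X(-D)$.

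Next I would apply the degeneration formula to the deformation to the normal cone of $D\subset X$, whose central fibre is $X\cup_D P_D$ with $P_D=\mathbb{P}(N_{D/X}\oplus\shO_D)$ and with the proper transform of $D$ specializing to a section of $P_D$ disjoint from the singular locus $X\cap P_D$. For the relative invariant the point insertions may be placed at generic points of $X\smallsetminus D$, so in the degeneration formula the $X$-component carries the full class $\mathsf{d}$ and meets the singular divisor with total tangency $m$ while the $P_D$-component carries only fibre (vertical) classes; the maximal-tangency relative condition then forces, in the leading term, a single node of contact order $m$ glued to a degree-$m$ rubber map to $P_D$, fully ramified over the node-section and with full tangency over the other section. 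I would run the parallel argument on the local side, compactifying $\tot(\shO_X(-D))$ by $P$ and degenerating so that the Euler class $e\bigl(R^1\pi_*f^*\shO_X(-D)\bigr)$ is itself split between an $X$-contribution and a $\pone$-bundle rubber contribution.

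The crux is twofold. First, one must evaluate the rubber integral: the contribution of genus $0$, degree-$m$ rubber maps to the $\pone$-bundle $P_D\to D$ with full ramification over one section and full tangency over the other, against the fibrewise $H^1$ of the relevant negative line bundle (of rank $m-1$), together with the degeneration gluing factor $m$ at the contact-$m$ node and the $\tfrac1m$ from automorphisms of the $m$-fold cover; these combine to precisely $(-1)^{m+1}\,m$, the sign being the Euler number of an $H^1(\pone,\shO(-m))$-type bundle. Second, one must check that after pushing the rubber contribution across the node the residual $X$-integral is exactly the twisted integral above; this relative/local comparison follows by chasing $0\to\shO_X(-D)\to\shO_X\to\shO_D\to0$ through the genus $0$ obstruction theories (where $R^1\pi_*f^*\shO_X=0$), identifying $R^1\pi_*f^*\shO_X(-D)$ with the cokernel of $\shO\to R^0\pi_*f^*\shO_D(D)$. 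Matching the two sides and verifying that point and descendent insertions propagate unchanged through the degeneration then yields the stated equality.

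The main obstacle is this rubber computation together with the virtual-class bookkeeping in the degeneration formula: one must show that nefness of $D$ and the hypothesis $\mathsf{d}\cdot D>0$ exclude every stratum but the leading one---no component of the degenerate curve should \emph{escape} into the divisorial direction with excess degree---track the obstruction bundle precisely through the node so that the $X$-side integrand is literally the local integrand, and evaluate the rubber integral sharply enough to pin down both the sign $(-1)^{m+1}$ and the factor $m$. A secondary and more routine point is the comparison of genus $0$ log and relative invariants for a smooth divisor together with the compatibility of point and descendent insertions with the whole argument.
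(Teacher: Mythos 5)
This theorem is not proved in the present paper: it is quoted verbatim from \cite{vGGR}, so there is no internal argument to compare against. Measured against the actual proof in \cite{vGGR}, your proposal reconstructs the right strategy: pass from log to relative invariants for the smooth pair (Abramovich--Marcus--Wise in genus $0$), apply the degeneration formula to the deformation to the normal cone of $D\subset X$, reduce the $\mathbb{P}^1$-bundle factor to a fully ramified degree-$m$ multiple-cover contribution equal to $(-1)^{m-1}/m$ (which, against the gluing multiplicity $m$, yields the conversion factor $(-1)^{m+1}m$), and use nefness of $D$ to kill the remaining strata. Those are precisely the ingredients of \cite{vGGR}.

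Two points need straightening out. First, the degeneration should be run on the \emph{local} side only: one extends the twisting bundle $\shO_X(-D)$ over the total space of the degeneration as $\shO(-\widetilde D)$ for $\widetilde D$ the proper transform of $D\times\mathbb{A}^1$, and the crucial fact is that this restricts \emph{trivially} to the $X$-component of the central fibre (the proper transform meets only the $\infty$-section of $P_D$). It is this triviality, together with $R^1\pi_*f^*\shO_X=0$ in genus $0$, that makes the $X$-factor in the degeneration formula the honest untwisted relative invariant of $(X,D)$, while all of the twist concentrates on the $P_D$-factor. Your formulation --- degenerating the relative side as well, and ``checking that the residual $X$-integral is the twisted integral'' via the sequence $0\to\shO_X(-D)\to\shO_X\to\shO_D\to0$ --- has the twist on the wrong component: degenerating $(X,D)$ to its own normal cone only reproduces the tautological rigidification identity and does not by itself produce the local integrand. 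Second, the exclusion of all other splitting types is not a soft consequence of nefness alone; in \cite{vGGR} it is a genuine vanishing statement for the twisted relative invariants of $(P_D,D_0)$ (no contribution unless the class is a fibre multiple with a single relative point of maximal tangency), proved by $\mathbb{C}^*$-localization on the fibres, and the same localization produces the $(-1)^{m-1}/m$. You correctly identify these as the crux, but they are exactly where the content of the theorem lies, so as written the proposal is a sound plan rather than a proof.
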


There are two natural directions to generalise the log-local
principle further. The first is to investigate extensions to correspondences with other invariants. At the level
of BPS invariants \cite{MR1844627,MR3228298,CGKT1,CGKT2,CGKT3} this is proven for the pair of $\mathbb{P}^2$ and smooth cubic in \cite{Bou19a,Bou19b} and in higher genus in \cite{Bousseau:2020ckw}. In \cite{Bousseau:2020fus, Bousseau:2020ryp}, we extend the correspondences to the non-toric and higher genus/refined setting and include open Gromov--Witten invariants, their underlying open BPS counts, as well as quiver Donaldson--Thomas invariants to the set of correspondences. Another direction is the relationship between local and orbifold invariants \cite{TY20,BNTY21}.

The second natural question is to what extent the log-local principle generalises to
the case when $X$ and $D$ are not smooth: log Gromov-Witten theory is indeed well-defined for any pair $(X,D)$ which is log smooth, 
but it is unclear how to define a local geometry in such generality. In the
present paper, we consider a situation that goes beyond the smoothness assumptions of \cref{conj:vGGR} and where both log and local sides can be defined: we take for $X$ a 
$\Q$-factorial projective toric variety and for $D$ the toric
boundary divisor of $X$. As $X$ is 
$\Q$-factorial, it makes sense to require that the components $D_j$ of $D$ are nef.
We show in \cref{classtoric} that requiring each $D_j$ to be nef forces $X$ to be a product of fake
weighted projective spaces. While such an $X$ is not
necessarily smooth, and  $D$ is typically not normal crossing, 
$(X,D)$ can naturally be viewed as a log smooth variety, and so log Gromov-Witten invariants of $(X,D)$ are well-defined.
On the other hand, $X$ can be naturally viewed as a smooth Deligne--Mumford stack, and the local geometry $\tot\left(\bigoplus_{j=1}^l\shO_X(-D_j)\right)$ makes sense in the category of orbifolds.
The local Gromov-Witten invariants can be defined using orbifold
Gromov--Witten theory \cite{MR2450211}, and it thus makes sense to ask if the
genus $0$ log invariants of maximal tangency of such a pair $(X,D)$ are
related in the sense of \cref{conj:vGGR} to the corresponding
local invariants. Our main result is the following
\cref{thm:intro}; we refer to \cref{thm:ponen,thm:main_log,thm:main_loc,thm:main} for precise statements.

\begin{thm}
Let $X$ be a $\Q$-factorial projective toric variety and let $D$ be the toric boundary divisor of $X$. Assume that all the components $D_j$ of $D$ are nef.
Then the genus $0$ log Gromov-Witten invariants of maximal tangency of $(X,D)$, and the genus $0$ local Gromov-Witten invariants of $(X,D)$, both with descendent
point insertions, can be computed in closed form for all degrees. As a corollary, the log-local principle holds for the resulting invariants.
\label{thm:intro}
\end{thm}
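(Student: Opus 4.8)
\emph{Strategy of proof.} The plan is to compute both the log and the local invariants in closed form and then match them. The first step is a reduction to a single factor: by \cref{classtoric}, the hypothesis that every component $D_j$ of the toric boundary be nef forces $X$ to be a product $X=\prod_k\PP(w^{(k)})$ of fake weighted projective spaces, with $D$ the union of the pullbacks of the toric boundaries of the factors. Both the genus $0$ log Gromov--Witten theory of maximal tangency and the genus $0$ twisted (``local'') Gromov--Witten theory satisfy product formulas compatible with this decomposition and with descendent point insertions: on a product the point class is the external product of point classes, the curve class and the maximal-tangency profile split factor by factor, and the descendent $\psi$-classes are pulled back from $\overline{M}_{0,n}$ on every moduli space in sight. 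Thus it suffices to treat $X=\PP(w_0,\dots,w_n)$, a fake weighted projective space with $D=D_0+\cdots+D_n$ its toric boundary; note that $\sum_j D_j=-K_X$, so $\tot\big(\bigoplus_j\shO_X(-D_j)\big)$ is a (stacky) local Calabi--Yau, which is where one expects the cleanest formulas.

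For the local side, view $X=\PP(w)$ as a smooth toric Deligne--Mumford stack; the local invariants are the genus $0$ Gromov--Witten invariants of $X$ twisted by the Euler class of $\bigoplus_j R^1\pi_*f^*\shO_X(-D_j)$. These I would compute via the mirror theorem for toric Deligne--Mumford stacks: the twisted small $I$-function of $X$ is an explicit hypergeometric series, the Calabi--Yau condition $\sum_j D_j=-K_X$ keeps the relevant part of the mirror map under control, and the one-pointed descendent invariants $\langle\psi^{a}[\pt]\rangle^{\mathrm{loc}}_{\mathsf{d}}$ are read off as its coefficients. Multi-pointed descendent point invariants reduce to the one-pointed ones via the string, dilaton and divisor equations together with the genus $0$ topological recursion and WDVV relations, producing the closed formula of \cref{thm:main_loc}.

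For the log side, $(X,D)$ is log smooth so its log Gromov--Witten invariants are defined, and I would compute the genus $0$ maximal-tangency invariants with descendent point insertions by passing to an equivalent toric model: rooting $X$ along the components of $D$ to the orders prescribed by maximal tangency produces again a toric Deligne--Mumford stack, and a log-orbifold correspondence identifies the log invariants of $(X,D)$ with orbifold Gromov--Witten invariants of this stack; alternatively one computes directly via the tropical correspondence for log Gromov--Witten theory of toric varieties, the contributing tropical curves in the fan of $\PP(w)$ carrying one unbounded ray in each boundary direction. In genus $0$ the contributing maps are rigid enough that the count, weighted by the $\psi$-classes, becomes an explicit combinatorial sum, giving \cref{thm:main_log}. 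Comparing the two closed formulas and dividing the log invariants by $\prod_j(-1)^{\mathsf{d}\cdot D_j+1}(\mathsf{d}\cdot D_j)$ as in \cref{conj:vGGR} then yields \cref{thm:main}: both sides reduce to the same one-pointed building block, and the multi-pointed cases are propagated identically by the genus $0$ recursion relations.

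I expect the log side to be the main obstacle: log Gromov--Witten theory of a \emph{singular}, generally non-normal-crossing boundary is substantially more delicate than the smooth situation of \cref{thm:vGGR}, and the interaction of the descendent $\psi$-insertions with the toric, tropical, or log-orbifold computation is precisely where corrections can enter and must be tracked carefully. The hypergeometric identity underlying the final comparison, while mechanical in principle, may also require a nontrivial transformation to bring the log and local closed forms into the same shape.
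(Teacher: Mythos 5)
Your overall toolkit matches the paper's: tropical correspondence (or a log--orbifold substitute) for the log side, the mirror theorem for toric Deligne--Mumford stacks for the local side, triviality of the mirror map thanks to $\sum_j D_j=-K_X$, and genus-$0$ reconstruction to pass from one-pointed to two-pointed descendants (the paper's \cref{lem:recons} does exactly this via flatness of the Dubrovin connection, i.e.\ WDVV). The substantive divergence is your opening reduction to a single fake weighted projective space via ``product formulas,'' and this is where the proposal has a genuine gap. First, the invariants in question have only one or two markings, and the descendent powers are $\psi_1^{n_X+r_X-2}$ and $\psi_2^{r_X-1}$, which depend on the number $r_X$ of factors; they are therefore not literally products of the analogous invariants of the factors, and the Behrend-type product formula expresses invariants of a product as integrals over $\overline{M}_{0,n}$ of classes pushed forward from each factor --- which degenerates for $n=1,2$ and in any case requires knowing full cohomology classes on $\overline{M}_{0,n}$, not just numbers. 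Second, on the log side a product formula for log Gromov--Witten invariants of maximal tangency is itself a nontrivial theorem that you would need to import. The paper sidesteps all of this: it computes directly on the product, using the product structure only in the weak, verifiable form of projecting tropical curves to the fans of the factors (\cref{prop:tqd}) and of writing down the $I$-function of the product stack.

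Two smaller cautions. The log--orbifold correspondence you offer as an alternative (rooting along each $D_j$) is not established for multi-component boundaries in the generality needed here and is known to fail beyond genus $0$ and for small rooting parameters; the paper instead uses the Mandel--Ruddat tropical correspondence, with an explicit remark that although it is stated for smooth $X$, the contributing curves avoid the deeper toric strata so the argument carries over --- a point your proposal would also need to address, since $X$ is singular. Finally, the closed formulas do not require any ``nontrivial hypergeometric transformation'' at the end: the one-pointed local invariant is read off from a single coefficient of $(I^{X^{\rm loc}_D})^{[0]}$ after factoring out $\lambda^{|\mathsf{n}_X|+r_X}$, and the log side produces the reciprocal factor $\smash{\overset{\circ}{\prod}_j e^X_j(\mathsf{d})}$ on the nose, so \cref{thm:main} is an immediate comparison of \cref{thm:main_log} and \cref{thm:main_loc}.
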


Except for the well-studied case when $X=\pone$, the log and local Gromov--Witten invariants of $(X,D)$
are non-zero only for one, two or, provided $X=(\pone)^n$, three point insertions. For $X=(\pone)^n$ we prove an equality of virtual fundamental classes and refer to well-known techniques to compute the invariants. For the other cases, our proof
proceeds by calculating both sides to obtain explicit closed formulas
for these invariants for all $(X,D)$ (\cref{thm:main_log,thm:main_loc}). To compute the log
invariants we use the tropical correspondence result \cite{ManRu} and an
algorithm of \cite{ManRu19} for the tropical multiplicity.
The log-local principle of
\cref{conj:vGGR} then predicts an explicit formula in all degrees for the local
invariants, which we verify using local mirror symmetry techniques and a
reconstruction result from small to big quantum cohomology.

\subsection*{Relation to \cite{NR_new} and \cite{Bousseau:2020fus,Bousseau:2020ryp}}

After this paper was finished, we received the manuscript
 \cite{NR_new} where the
log-local principle is considered for simple normal crossings
divisors. The respective strategies have different flavours in the proof and complementary
virtues in the outcome: \cite{NR_new} consider the log/local correspondence for $X$ smooth and $D_j$ a hyperplane
section, with a beautiful geometric argument reducing the simple normal
crossings case to the case of smooth pairs, and with no restrictions on $X$. The
combinatorial pathway we pursued in the toric setting allows on the other
hand to relax the hypotheses on the smoothness of $X$, the normal crossings
nature of $D$, and the very
ampleness of $D_j$, and it lends itself to a wider application to the
case when $D$ is not the toric boundary and the refinement to
include all-genus invariants. We consider this specifically in the follow-up
papers \cite{Bousseau:2020fus,Bousseau:2020ryp}, where we prove the log-local principle for 
log Calabi--Yau surfaces with the components of the anticanonical divisor
smooth and nef and suitably reformulate it to, and verify it for, the higher genus theory in these cases. In addition, we extend the correspondences to include open Gromov--Witten invariants, the various underlying BPS counts, and quiver Donaldson--Thomas invariants.

\begin{rem}
In its most recent version, \cite{NR_new} gives a counter-example in principle to Conjecture \ref{conj:vGGR}. It is proven that there is a choice of (unspecified) insertion leading to a counter-example. The geometry however is not log Calabi--Yau and the insertion is not formed of point insertions. It remains open whether the conjecture holds in the more restrictive setting of a log Calabi--Yau variety with only point insertions. The present paper as well as \cite{Bousseau:2020ryp,Bousseau:2020fus} provide evidence for it.
\end{rem}

\subsection*{Acknowledgements}

We thank Helge Ruddat for getting this project started by asking us to compute
some local Gromov--Witten invariants of $\mathbb{P}^4$ and for helpful discussions all along. We thank Travis Mandel for detailed explanations on \cite{ManRu,ManRu19}. We are grateful to
Miles Reid and Andrea Petracci for some discussions on the classification of toric varieties with
nef divisors. We thank Makoto Miura for a discussion on Hibi
varieties. Finally, it is a pleasure to thank Navid Nabijou and Dhruv Ranganathan for discussions on their
parallel work \cite{NR_new}.

\section{Setup}

\subsection{Notation}

Let $X$ be a $\mathbb{Q}$-factorial projective toric variety of dimension $n_X$ and let 
$D=D_1+\cdots+D_{l_D}$ be the toric boundary divisor of $X$.
In the foregoing discussion, we write
$r_X\coloneqq\mathrm{rank}~\mathrm{Pic}(X)$ for the rank of the Picard group
of $X$, so that $l_D=n_X+r_X$, and $\chi_X=\chi(X)\coloneqq\dim_\bbC H(X,\bbC)$ for the dimension of the cohomology of $X$. 
The variety $X$ has a natural presentation as a GIT quotient
$\bbC^{n_X+r_X}\GIT{t}{((\bbC^\star)^{r_X}\times G_X)}$ for $G_X$ a finite
abelian group; for every $1\leq j \leq l_D$, we write $D_j$ for the divisor
corresponding to the 
$(\bbC^\star)^{r_X}\times G_X$ reduction to $X$ of the $j^{\rm th}$ coordinate
hyperplane in $\bbC^{n_X+r_X}$. Note in particular that $\sum_{j=1}^{l_D} D_j=-K_X$.

We also fix a further piece of notation, which will turn out to be convenient when
dealing with the book-keeping of indices for products of fake weighted projective
spaces. Let $m\in \bbN_0$. If $\mathsf{v}=(v_1, \dots, v_m)\in\bbN^m$ is a lattice
point in the non-negative $m$-orthant, we write
$|\mathsf{v}|=\sum_{i=1}^m v_i$ for its 1-norm; in the following we will consistently use
serif fonts for orthant points and italic fonts for their Cartesian
coordinates. For $R$ a finitely generated commutative monoid with generators
$\a_1, \dots, \a_m$, $x=\a_1^{j_1}\dots \a_m^{j_m}\in R$ a reduced word in $\a_i$, and $\mathsf{v}\in \bbN^m$, we write
$x^\mathsf{v}$ for the product $\prod_i \a_i^{j_i v_i}\in R$. 
We introduce partial orders on the $m$-orthant by saying that $\mathsf{v} \prec
\mathsf{w}$ (resp.\ $\mathsf{v} \preceq
\mathsf{w}$) if $v_i < w_i$ (resp. $v_i \leq w_i$) for all $i=1,\dots,
m$. Also, we will write
$Q^X_{ij}\in \bbZ$, 
$i=1,\dots,
    r_X$, $j=1,\dots, n_X+r_X$, 
for the weight of the $i^{\rm th}$ factor of the $(\bbC^\star)^{r_X}$
torus action on the $j^{\rm th}$ affine factor of $\bbC^{n_X+r_X}$.

\label{sec:setup}

\begin{defn}
A {\rm nef toric pair} $(X,D)$ is a pair given by $X$ a $\Q$-factorial
complex projective toric variety with toric boundary divisor $D=D_1+\cdots+D_{l_D}$, such that all the components $D_j$ are nef.
\label{def:neftp}
\end{defn}

Nefness of all the components $D_j$ of the toric bundary divisor imposes
strong conditions on $X$, as the  Proposition \ref{classtoric} below shows.

\begin{defn}
Let $X$ be a $\Q$-factorial projective toric variety, and let $\bbC^{n_X+r_X}\GIT{t}{((\bbC^\star)^{r_X}\times G_X)}$ be its natural GIT description. We say that $X$ is a fake weighted projective space 
if $\bbC^{n_X+r_X}\GIT{t}{(\bbC^\star)^{r_X}}$ is a weighted projective space.
\end{defn}

\begin{prop}\label{classtoric}

Let $X$ be a $\QQ$-factorial projective variety such that every effective divisor on $X$ is nef. Then $X$ is a product of fake weighted projective spaces.
\end{prop}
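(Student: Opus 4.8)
The plan is to argue at the level of fans and their Mori cones. Let $\Sigma$ be the fan of $X$ in $N_\RR \cong \RR^{n_X}$, with primitive ray generators $u_1, \dots, u_{l_D}$ corresponding to the toric divisors $D_1, \dots, D_{l_D}$. Since $X$ is $\QQ$-factorial and projective, every cone of $\Sigma$ is simplicial and $\Sigma$ is complete. The hypothesis is that every effective toric divisor class is nef; equivalently, every class $\sum a_j D_j$ with $a_j \geq 0$ lies in the nef cone $\Nef(X)$. First I would reduce to the statement that each individual $D_j$ is nef — which is literally the hypothesis — and observe what this says numerically: for every torus-invariant curve $C$ (i.e.\ every wall of $\Sigma$, a codimension-one cone $\tau$ shared by two maximal cones) and every $j$, we need $D_j \cdot C \geq 0$. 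The wall relation for $\tau$ reads $\sum_{k} b_k u_k = 0$ with $b_k \in \QQ$, where the two generators $u_{k_+}, u_{k_-}$ not lying on $\tau$ have strictly positive coefficients and all other nonzero coefficients are negative; then $D_j \cdot C$ is (up to a positive multiple) the coefficient $b_j$. So the nefness of all $D_j$ forces \emph{every} wall relation to have at most two positive coefficients and \emph{no} negative coefficients among the rays lying on $\tau$ — i.e.\ each wall relation involves exactly the two ``opposite'' rays $u_{k_+}, u_{k_-}$ and possibly others, all with nonnegative coefficient on the rays of $\tau$. This is an extremely rigid combinatorial condition.

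The key step is then to show this rigidity forces the ray generators to split into blocks, each of which is the ray set of a (fake) weighted projective space, with $\Sigma$ the product fan. Concretely, I would argue as follows. Build a graph (or rather a partition) on the set of rays $\{u_1,\dots,u_{l_D}\}$: declare $u_i$ and $u_k$ ``linked'' if they appear as the pair of opposite positive-coefficient rays $u_{k_+}, u_{k_-}$ in some wall relation. Using completeness of $\Sigma$ and the nonnegativity of all wall-relation coefficients, one shows that the transitive closure of this relation partitions the rays into subsets $S_1, \dots, S_p$ such that (a) for each block $S_a$ there is a single linear relation $\sum_{u_k \in S_a} w_k u_k = 0$ with all $w_k \in \ZZ_{>0}$ — this uses that mixed relations (positive coefficients on rays from two different blocks) are excluded by the two-positive-coefficient constraint combined with connectivity, and that $\QQ$-factoriality makes the relation unique up to scale — and (b) the spans $N_a := \langle S_a \rangle_\RR$ give an internal direct sum $N_\RR = \bigoplus_a N_a$, with $\Sigma$ the join of the complete simplicial fans $\Sigma_a$ on $S_a$ inside $N_a$. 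A complete simplicial fan on $|S_a| = \dim N_a + 1$ rays with a single positive relation is exactly the fan of the fake weighted projective space $\PP(w_k : u_k \in S_a)$ (the finite group being $N_a / \sum \ZZ u_k$), and a fan that is the join of the $\Sigma_a$ over a direct sum decomposition of the lattice is the product fan. Hence $X \cong \prod_a \PP(w_k : u_k \in S_a)$, a product of fake weighted projective spaces.

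The main obstacle is step two: upgrading the pointwise numerical condition ``every wall relation is nonnegative on its shared cone'' to the global product decomposition of the fan. The delicate points are (i) ruling out wall relations that are nonnegative but ``genuinely mixed'', i.e.\ supported on rays from what one wants to be different blocks — here one must use completeness of $\Sigma$ to produce enough walls and chain them together, perhaps via an induction on $r_X = \rank \Pic(X)$, the $r_X = 1$ case being precisely ``one positive relation $\Rightarrow$ fake weighted projective space''; and (ii) checking that the blocks really do span complementary subspaces rather than merely spanning subspaces that intersect — again a consequence of $\QQ$-factoriality (simpliciality of the maximal cones) forcing the ray generators within each maximal cone, one from the ``opposite pair'' perspective, to be linearly independent across blocks. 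An alternative, possibly cleaner, route for this step is to work on the dual side: each $D_j$ nef means the piecewise-linear support function $\varphi_{D_j}$ is convex; requiring convexity for all $j=1,\dots,l_D$ simultaneously, together with $\sum_j D_j = -K_X$ and $\QQ$-factoriality, should force the moment polytope of $X$ to be a product of simplices, from which the product-of-fake-weighted-projective-spaces conclusion is immediate by toric duality. I would pursue whichever of these two formulations makes the ``no mixed relations'' argument most transparent.
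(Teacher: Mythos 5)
Your reduction of the hypothesis to a condition on wall relations is correct: for a complete simplicial fan, $D_j\cdot C_\tau$ is, up to a positive factor, the coefficient of $u_j$ in the wall relation of $\tau$, so requiring every $D_j$ to be nef is equivalent to requiring every wall relation to have nonnegative coefficients on the rays of the wall. But everything after that point is a plan rather than a proof, and the step you defer --- showing that this rigidity forces the rays to partition into blocks, each carrying a single positive integral relation, spanning complementary subspaces, with $\Sigma$ the join of the corresponding fans --- is the entire content of the proposition. You flag it yourself as ``the main obstacle'' and offer two candidate routes (chaining wall relations together with an induction on $r_X$, or a dual convexity argument on support functions and the moment polytope), but neither is carried out: nothing in the write-up actually rules out a wall relation with a strictly positive coefficient on a ray of $\tau$ that would link two of your putative blocks, nor establishes that the block spans are in direct sum rather than merely intersecting. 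As written, the argument is complete only in the trivial direction and would at best handle $r_X=1$.

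For comparison, the paper does not attempt this combinatorics directly: it cites a result of Fujino and Sato (Proposition 5.3 of their paper), which under exactly this hypothesis produces a finite surjective toric morphism $\prod_i \PP^{n_i}\to X$, and then observes that the associated finite-index inclusion of lattices identifies the two fans, so that $X$ is the quotient of $\prod_i\PP^{n_i}$ by the finite group $N'/N$, hence a product of fake weighted projective spaces. In other words, the classification you are trying to reprove from first principles is precisely what is being imported. Your outline is a plausible blueprint for a self-contained proof, but to stand on its own it needs the ``no mixed relations'' and ``complementary spans'' claims proved in detail, or else a citation playing the role that Fujino--Sato plays in the paper.
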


\begin{proof}
By \cite[Proposition 5.3]{FujiSato}, $X$ admits a finite surjective toric morphism
$\prod \PP^{n_i}  \to  X$. Let $\Sigma \subset N\otimes \RR$ be the fan of $\prod \PP^{n_i}$ and $\Sigma' \subset N'\otimes \RR$ the fan of $X$. Then we have an injective morphism of lattices $N \to N'$ of finite index. Identifying $N$ with its image in $N'$, $\Sigma=\Sigma'$. It follows that $X$ is the quotient of $\prod \PP^{n_i}$ by $N'/N$. Hence $X$ is a product of fake weighted projective spaces.
\end{proof}

By \cref{classtoric}, there is $\mathsf{n}_X\in\bbN^{r_X}$ such that $n_X=|\mathsf{n}_X|$ and $X$ is a product of $r_X$, $n_i:=(\mathsf{n}_X)_i$-dimensional
fake weighted projective spaces,
$$X=\prod_{i=1}^{r_X} \bbP^{G_i}\left(\mathsf{w}_X^{(i)}\right),$$
with $\mathsf{w}_X^{(i)} =((\mathsf{w}_X)_1^{(i)},\dots,
(\mathsf{w}_X)_{n_i+1}^{(i)}) \in\bbN^{n_i+1}$, which we may assume not to have any common factors, and
\[
\bbP^{G_i}\left(\mathsf{w}_X^{(i)}\right) := \bbP\left(\mathsf{w}_X^{(i)}\right)\GIT{t}G_i,
\]
for $G_i$ a finite abelian group.
Notice that, for fixed $i$ and defining $\varepsilon_i:=\sum_{k=1}^{i-1} (n_k+1)$, we have
\beq
Q^X_{i,j+\varepsilon_i}= \l\{ \bary{cc}
(\mathsf{w}_X)_j^{(i)} &  1\leq j \leq n_i+1, \\
0 & \mathrm{else,}
\eary
\r.
\eeq
independent of the $G_i$.
Let $H_i:=\mathrm{pr}_i^* c_1(\cO_{\bbP^{G_i}(\mathsf{w}^{(i)})}(1))$ denote the
pull-back to $X$ of the (orbi-) hyperplane class of the $i^{\rm th}$
factor of $X$ and let $H:=H_1 \dots H_{r_X}$. 
These generate the classical cohomology ring,
\beq
\hhh^\bullet(X, \bbC) = \frac{\bbC[H_1, \dots, H_{r_X}]}{\bra \l\{H_i^{n_i+1}\r\}_{i=1}^{r_X} \ket},
\eeq
which is independent of the $G_i$,
and we can take a homogeneous linear basis for $H^\bullet(X,\bbC)$ in the form
$\{ H^\mathsf{l} \}_{\mathsf{l}_i\leq n_i}$. Notice, in particular, that  
\[
[\mathrm{pt}]=\prod_{i=1}^{r_X}\big{|}G_i\big{|} \: \, \prod_{i,j}
\left(\mathsf{w}_X\right)^{(i)}_j \: H^{\mathsf{n}_X}.
\label{eq:ptHnx}
\]
Indeed, if $G_i$ is trivial, this follows from applying
\cite[Theorem~1]{MR0339247} to each component in the product; 
and if $G_i$ is non-trivial, then the extra factor comes from the
component-wise identification
$H^\bullet\left(\bbP^{G_i}(\mathsf{w}^{(i)}),\bbC\right)=H^\bullet(\bbP(\mathsf{w}^{(i)}),\bbC)^{G_i}$. We will also write $\mathsf{d}=(d_1,\dots,d_{r_X})$ for the curve class
$d_1H_1+\cdots+d_rH_r$
and 
\beq \mathsf{d}^{\mathsf{n}_X} := \prod_{i=1}^{r_X} d_i^{n_i}\,.\eeq We order the toric divisors $D_j$ of $X$, $j=1,\dots, |\mathsf{n}_X|+r_X$, in such a way that
\[
Q^X_{ij} = \stackrel{i^{\rm th}}{(0,\dots,0,1,0,\dots,0)}\cdot D_j,
\]
where the 1 is in
the $i^{\rm th}$ position.
Finally, we define
\beq
e^X_j(\mathsf{d}):=\sum_i Q^X_{ij} d_i = \mathsf{d}\cdot D_j, \quad e^X(\mathsf{d}):=\sum_{j=1}^{|\mathsf{n}_X|+r_X} e^X_j(\mathsf{d}) = -\mathsf{d}\cdot K_X.
\eeq

\subsection{Log Gromov-Witten invariants}
\label{sec:log}

Let $(X,D)$ be a nef toric pair and let $\mathsf{d}$ be an effective curve
class on $X$.\footnote{Note that unlike in Conjecture \ref{conj:vGGR} we do
  not require that $\mathsf{d} \cdot D_j >0$ for all $1 \leq j \leq l_D$.} For the definition of log Gromov-Witten invariants, we endow\footnote{We refer to \cite{vGOR} for an introduction to log geometry.}
$X$ with the divisorial log structure coming from
$D$, and view $(X,D)$ as a log smooth variety.  The log structure is used to
impose tangency conditions along the components $D_j$ of $D$: in this paper we consider
genus 0 stable maps into $X$ of class $\mathsf{d}$ that meet each component $D_j$ in
one point of maximal tangency $\mathsf{d}\cdot D_j$.
The appropriate moduli space $\ol{\mmm}^{\log}_{0,m}(X,D,\mathsf{d})$ of
genus $0$
$m$-marked maximally
tangent stable log maps was constructed (in all generality) in
\cite{GS13,Chen14,AbramChen14}. In this description, we have $m$ marked points that have tangency 0 with the boundary (interior marked points), and $l_D$ marked points with maximal tangency with each $D_j$ respectively. In case $\mathsf{d}\cdot D_j=0$ for some $j$, this means that the corresponding maximal tangency marked point is an interior marked point. There is a virtual fundamental class
\[
[\ol{\mmm}^{\log}_{0,m}(X,D,\mathsf{d})]^{\rm vir} \in \hhh_{2 \mathfrak{vdim}^{(X,D,\mathsf{d})}_{\rm log}} ( \ol{\mmm}^{\log}_{0,m}(X,D,\mathsf{d}) ),
\]
where
\begin{eqnarray*}
\mathfrak{vdim}^{(X,D,\mathsf{d})}_{\rm log} &=& -\mathsf{d} \cdot K_X+\dim X -3 + m - \sum_{j=1}^{l_D} (\mathsf{d}\cdot D_j-1) \\
  &=& n_X + m + l_D - 3 = 2n_X + r_X + m - 3.
  \label{eq:vdimlog}
\end{eqnarray*}
Evaluating at the marked points $p_i$ yields the evaluation maps
\[
\ev_i \colon \ol{\mmm}^{\log}_{0,m}(X,D,\mathsf{d}) \longrightarrow X.
\]
For $L_i$ the $i^{\rm th}$ tautological line
bundle on $\ol{\mmm}^{\log}_{0,m}(X,D,\mathsf{d})$,
whose fiber at $[f\colon(C,p_1,\dots,p_m)\to X]$
is the cotangent line of $C$ at $p_i$,
there are tautological classes
$\psi_i\coloneqq c_1(L_i)$.
We are interested in the calculation of the genus 0 log
Gromov--Witten invariants of maximal tangency of $(X,D)$ with 1, 2 or 3 point
insertions and $\psi$-class insertions at one point, defined as follows:
\begin{eqnarray}
    \label{eq:rpd}
R\mathfrak{p}^X_{\mathsf{d}} &:=& \int_{[\ol{\mmm}^{\rm log}_{0,1}(X,D,\mathsf{d})]^{\rm vir}}
\ev_1^*([{\rm pt}]) \cup \psi_1^{n_X+r_X-2},
\\
  \label{eq:rqd}
R\mathfrak{q}^X_{\mathsf{d}}  &:=& \int_{[\ol{\mmm}^{\rm log}_{0,2}(X,D,\mathsf{d})]^{\rm vir}}
\ev_1^*([{\rm pt}]) \cup \ev_2^*([{\rm pt}]) \cup \psi_2^{r_X-1}.
\end{eqnarray}

The invariant $R\mathfrak{p}^X_{\mathsf{d}}$ (resp.\ $R\mathfrak{q}^X_{\mathsf{d}}$) is a
virtual count of rational curves in $X$ of degree $\mathsf{d}=(d_1,\dots,d_{r_X})$ that
meet each toric divisor $D_j$ in one point of maximal tangency $\mathsf{d}\cdot D_j=\sum_{i=1}^r d_iQ^X_{ij}=e_j^X(\mathsf{d})$ and that pass through one
point in the interior with $\psi^{n_X+r_X-2}$ condition (resp. two
points in the interior, one of which with a $\psi^{r_X-1}$ condition). 

\begin{rmk}
Having a point condition on $X$ cuts down the dimension of the moduli space by $n_X$. Thus \eqref{eq:rpd} and \eqref{eq:rqd} cover all possible invariants with descendent point insertions except for two families of cases. For the first, one distributes the descendent insertions along both points in \eqref{eq:rqd}. Adapting the log calculations of Section \ref{sec:proof_log} to that case is left as an exercise to the reader, see also Remark \ref{remark} for the local side. The second family of cases concerns the invariants of $(\pone)^n$ with any number of marked points if $n=1$ and up to 3 marked points if $n\geq 2$. We treat $(\pone)^n$ separately in \cref{thm:ponen}.
 
\end{rmk}

\subsection{Local Gromov-Witten invariants}
\label{sec:loc}

Let $(X,D)$ be a nef toric pair as in \cref{def:neftp} and write $X_D^{\rm loc} := \mathrm{Tot}( \bigoplus_i \cO_X(-D_i))$ for the
target space of the local theory.
By \cref{classtoric}, we can view $X$ and $X_D^{\rm loc}$ as the coarse moduli
schemes of smooth Deligne--Mumford stacks $\cX$ and $\cX_D^{\rm loc}$ over $\bbC$, where
\bea
\cX &:=& \bigtimes_{i=1}^{r_X} \l[ \big(\bbC^{(n_X)_i} \setminus \{0\}\big) /
  (\bbC^\star\times G_i)\r], \nn \\
\cX_D^{\rm loc} &:=& \bigtimes_{i=1}^{r_X} \l[ \Big( \big( \bbC^{(n_X)_i} \setminus
  \{0\}\big) \times \bbC^{(n_X)_i+1}\Big) /
  (\bbC^\star\times G_i)\r].
\eea

Even though $X_D^{\rm loc}$  is not proper and may be singular, the locution
``Gromov--Witten theory of $X_D^{\rm loc}$'' receives a
meaning in terms of the orbifold Gromov--Witten theory of $\cX$ twisted by
$\bigoplus_i \cO_\cX(-D_i)$ 
\cite{MR1950941,MR2450211} and restricted
over its non-stacky part, and we refer the reader in particular to
\cite{MR2450211} for the relevant background on the Gromov--Witten theory of
Deligne--Mumford stacks. Let $\ol{\mmm}_{0,m}(\cX,\mathsf{d})$ be the moduli stack of twisted genus $0$
$m$-marked stable maps $[f:\cC\to \cX]$ with $f_{*}([\cC])=\mathsf{d}\in
H_2(\cX, \bbQ)$, where $\cC$
is an $m$-pointed twisted curve\footnote{This means that the
  coarse moduli space of $\cC$ is a pre-stable curve in the ordinary sense, with
cyclic-quotient stackiness allowed at special points, and satisfying
kissing (balancing) conditions for the stacky structures at the nodes. See
\cite[Section~4]{MR2450211} for more details.} \cite{MR2450211}, and write $\ol{\mmm}_{0,m}(X,\mathsf{d})$ for the substack of twisted stable
maps such that the image of all evaluation maps is contained in the zero-age
component of the (rigidified, cyclotomic) inertia stack of $\cX$.
The stack $\ol{\mmm}_{0,m}(\cX,\mathsf{d})$ can be equipped with a virtual fundamental class \cite[Section
  4.5]{MR2450211}, which induces a virtual fundamental class of pure
homological degree over the stack $\ol{\mmm}_{0,m}(X,\mathsf{d})$ of stable maps to the coarse moduli space,
\[
\left[\ol{\mmm}_{0,m}(X,\mathsf{d})\right]^{\rm vir}\in\hhh_{2 \mathfrak{vdim}^{(X,D,\mathsf{d})}}(\ol{\mmm}_{0,m}(X,\mathsf{d}),\bbQ),
\]
where
\[\mathfrak{vdim}^{(X,D,\mathsf{d})} :=-K_X\cdot\mathsf{d} + \dim X + m -
3= \mathfrak{vdim}^{(X,D,\mathsf{d})}_{\rm log}+e_X(\mathsf{d})-l_D.\]
  
Let now $\mathsf{d}$ be such that $\mathsf{d} \cdot D_j>0$ for all $1 \leq j \leq
l_D$. Then  
 $\hhh^0(\cC,f^{*}\bigoplus_{j=1}^{l_D}\shO_{X}(-D_j))=0$
for every twisted stable map $[f:\cC\to \cX]$ with $f_{*}([\cC])=\mathsf{d}$, and so 
$
\obstr_D \coloneqq R^{1}\pi_{*}f^{*}
\left(\bigoplus_{j=1}^{l_D} \shO_{X}(-D_j)\right)
$
is a vector bundle on $\ol{\mmm}_{0,m}(\cX,\mathsf{d})$, 
which is of rank $\sum_{j=1}^{l_D}(\mathsf{d}\cdot D_j-1)$ and has fibre  $\hhh^1(C,f^{*}
\bigoplus_{j=1}^{l_D}\shO_{X}(-D_j))$ at a stable map $[f:\cC\to
  \cX]$. Restricting to the zero-age component defines the virtual fundamental class
\begin{eqnarray}
\label{eq:dtvfc}
[\ol{\mmm}_{0,m}(X_D^{\rm loc},\mathsf{d})]^{\rm vir}\coloneqq[\ol{\mmm}_{0,m}(X,\mathsf{d})]^{\rm vir}\cap
c_{\rm top}\left(\obstr_D\right) \in
\hhh_{2(\mathfrak{vdim}^{(X,D,\mathsf{d})}+l_D-e_X(\mathsf{d}))}(\ol{\mmm}_{0,m}(X,\mathsf{d}),\QQ),
\end{eqnarray}
and we have
\[\vdim
 \ol{\mmm}_{0,m}(X_D^{\rm loc},\mathsf{d})=
 \mathfrak{vdim}^{(X,D,\mathsf{d})}-e_X(\mathsf{d})+l_D=\mathfrak{vdim}^{(X,D,\mathsf{d})}_{\rm log}.\]

The restriction to the untwisted sector gives well-defined evaluation maps $
\ev_i \colon \ol{\mmm}_{0,m}(X,\mathsf{d}) \longrightarrow X$, 
and there are tautological classes
$\psi_i\coloneqq c_1(L_i)$, where the fibre of $L_i$  at a stable map
$[f:\cC\to\cX]$ is given by the cotangent line to the coarse moduli space of
$\cC$ at the $i^{\rm th}$ point. The (untwisted) local Gromov--Witten invariants
of $(X,D)$ are then caps of pull-backs of classes in $H^\bullet (X,
\bbC)$ via the evaluation maps against the virtual fundamental
class \eqref{eq:dtvfc}.
In particular, the local counterparts of \eqref{eq:rpd} and \eqref{eq:rqd} are  defined by
\begin{eqnarray}
  \label{eq:pd}
  \mathfrak{p}^X_{\mathsf{d}} &:=&
  \int_{[\ol{\mmm}_{0,1}(X_D^{\rm loc},\mathsf{d})]^{\rm vir}} \ev_1^*([\mathrm{pt}]) \cup \psi_1^{n_X+r_X-2},
  \\
  \mathfrak{q}^X_{\mathsf{d}} &:=&
  \int_{[\ol{\mmm}_{0,2}(X_D^{\rm loc},\mathsf{d})]^{\rm vir}} \ev_1^*([\mathrm{pt}]) \cup \ev_2^*([\mathrm{pt}]) \cup \psi_2^{r_X-1}.
  \label{eq:qd}
\end{eqnarray}

\section{Main results}

\label{sec:statements}

We first consider the case of $(\pone)^n$ and treat the general case thereafter.

\begin{thm}\label{thm:ponen}
Conjecture \ref{conj:vGGR} holds for $X=(\pone)^n$ with its toric boundary.
\end{thm}
\begin{proof}
For $X=\pone$, the log-local principle (at the level of the virtual fundamental classes) is a direct consequence of \cite{vGGR} since the toric divisors are disjoint. For $X=(\pone)^n$ with $n\geq2$, we apply the log product formula \cite{Her19,Ran19} on the log side and the product formula \cite{Beh99} on the local side to obtain an equality of virtual fundamental classes.
\end{proof}

Note that computational techniques to compute the invariants of $X=(\pone)^n$ (with arbitrary numbers of point insertions if $n=1$) are well-developed. For example, using tropical correspondence results one may show that the maximal tangency 3-pointed invariants of $(\pone)^n$ are $\prod_{j=1}^{2n} \mathsf{d}\cdot D_j=\prod_{i=1}^n d_i^2$.

The following \cref{thm:main_log,thm:main_loc}
compute the log and local Gromov-Witten invariants defined in
\cref{sec:log,sec:loc} in all degrees for a nef toric pair $(X,D)$.

\begin{thm}\label{thm:main_log}
Let $(X,D)$ be a nef toric pair and let $\mathsf{d}$ be an effective curve class on $X$. If there is $j$ such that $\mathsf{d} \cdot D_j =0$, then $R\mathfrak{p}^X_{\mathsf{d}} = R\mathfrak{q}^X_{\mathsf{d}} =0$. If $\mathsf{d} \cdot D_j >0$ for all
$1 \leq j \leq l_D$, then we have 
\bea
R\mathfrak{p}^X_{\mathsf{d}} &=& 1,  \\
R\mathfrak{q}^X_{\mathsf{d}}  &=&  \prod_{i=1}^{r_X} \big{|}G_i\big{|} \,  \l(\prod_{i,j} \big(\mathsf{w}_X\big)^{(i)}_j\r) \mathsf{d}^{\mathsf{n}_X}.
\eea
\end{thm}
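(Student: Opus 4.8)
The plan is to reduce both families of invariants to tropical curve counts via the logarithmic tropical correspondence of \cite{ManRu}, to evaluate the resulting multiplicities with the algorithm of \cite{ManRu19}, and to organise the bookkeeping using the product structure of $X$ given by \cref{classtoric}. Throughout I would work with the explicit model $X=\prod_{i=1}^{r_X}\bbP^{G_i}(\mathsf{w}_X^{(i)})$ and realise its fan inside the cocharacter lattice $N=\bigoplus_i N_i$, which contains $\bigoplus_i\bbZ^{n_i}$ with index $\prod_i|G_i|$ and whose rays are spanned by primitive generators $v^{(i)}_j$, $1\le j\le n_i+1$, satisfying the weighted balancing $\sum_j(\mathsf{w}_X)^{(i)}_j\,v^{(i)}_j=0$ in each block. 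After passing, if needed, to the smooth toric cover $\prod_i\bbP^{n_i}$ and recording the lattice index (or using an orbifold form of the correspondence directly), \cite{ManRu} identifies $R\mathfrak{p}^X_\mathsf{d}$ and $R\mathfrak{q}^X_\mathsf{d}$ with weighted counts of rational tropical curves in $N\otimes\bbR$ whose unbounded edges are, for each $(i,j)$, a single edge of weight $e^X_{j+\varepsilon_i}(\mathsf{d})=(\mathsf{w}_X)^{(i)}_j d_i$ in direction $v^{(i)}_j$, carrying contracted markings $x_1$ (and $x_2$) constrained to general points $P_1$ (and $P_2$), and with the descendant $\psi$-power at the last marking imposing that it lie at a vertex of prescribed valence; each curve is weighted by the multiplicity produced by \cite{ManRu19}. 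The vanishing statement is then immediate: if $\mathsf{d}\cdot D_j=0$, positivity of the weights forces $d_i=0$ for the block $i$ containing $D_j$, so all $n_i+1\ge 2$ unbounded edges pointing into $N_i\otimes\bbR$ disappear and any candidate tropical curve is constant in that block; for $R\mathfrak{q}^X_\mathsf{d}$ the two point conditions (whose $i$-th components differ) then become incompatible, and for $R\mathfrak{p}^X_\mathsf{d}$ too few ends remain for any vertex to reach the valence dictated by $\psi_1^{n_X+r_X-2}$, so both counts are empty.

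Assume now $d_i>0$ for all $i$. The invariant $R\mathfrak{p}^X_\mathsf{d}$ is the rigid case: the descendant power $n_X+r_X-2=l_D-2$ is the largest possible, so $x_1$ must sit at a single vertex incident to all $l_D$ unbounded edges — the ``star'' with that vertex pinned at $P_1$ — which is balanced precisely because $\sum_i d_i\sum_j(\mathsf{w}_X)^{(i)}_j v^{(i)}_j=0$, and which is the unique admissible tropical curve. Running the \cite{ManRu19} multiplicity algorithm at this single descendant vertex should return $1$, giving $R\mathfrak{p}^X_\mathsf{d}=1$.

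For $R\mathfrak{q}^X_\mathsf{d}$ the smaller power $\psi_2^{r_X-1}$ leaves the marking $x_2$ at a less degenerate vertex, and one must enumerate, for generic $P_1,P_2$, all rigid tropical curves through both points. Here I would exploit the product structure: since the unbounded edges coming from block $i$ lie in $N_i\otimes\bbR$, the block-$i$ projection of such a curve is a maximally tangent tropical curve of class $d_i$ in (the fan of) $\bbP^{n_i}$, and threading the $r_X$ blocks through the two point constraints and the constrained vertex pins down a finite set of combinatorial types. Summing the \cite{ManRu19} multiplicities over this set, the lattice index $\bigoplus_i\bbZ^{n_i}\subset N$ should contribute $\prod_i|G_i|$, the weighted balancing together with the weights on the edges at the constrained vertex should contribute $\prod_{i,j}(\mathsf{w}_X)^{(i)}_j$, and the number of admissible placements of the block-$i$ data along the curve should contribute $d_i^{n_i}$, assembling into $R\mathfrak{q}^X_\mathsf{d}=\prod_i|G_i|\,\big(\prod_{i,j}(\mathsf{w}_X)^{(i)}_j\big)\,\mathsf{d}^{\mathsf{n}_X}$.

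The hard part, I expect, is exactly this last step: correctly identifying which tropical curves contribute for generic $P_1,P_2$ and evaluating the descendant vertex multiplicity of \cite{ManRu19} uniformly in $\mathsf{d}$, in all the weights $(\mathsf{w}_X)^{(i)}_j$, and in all the finite groups $G_i$, so that the various combinatorial factors collapse to the stated closed form. A secondary, more routine point is to check that the tropical correspondence of \cite{ManRu} is available in the $\bbQ$-factorial/orbifold setting (either intrinsically or through the covering argument above) and that the $\psi$-to-valence dictionary is the expected one, which one can cross-check against $\mathfrak{vdim}^{(X,D,\mathsf{d})}_{\rm log}$.
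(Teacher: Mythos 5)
Your overall route is the same as the paper's: reduce both invariants to tropical counts via the correspondence of \cite{ManRu}, evaluate the multiplicities with the algorithm of \cite{ManRu19}, and organise everything through the product decomposition of \cref{classtoric}. Your identification of the unique ``star'' curve computing $R\mathfrak{p}^X_{\mathsf{d}}$ and the block-projection idea for $R\mathfrak{q}^X_{\mathsf{d}}$ match \cref{prop:tpd,prop:tqd}. (For the vanishing you argue tropically where the paper uses a one-line virtual-dimension argument; either is acceptable.)

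The genuine gap is that the multiplicity evaluations, which are the entire substance of the proof, are left as expectations, and the one place where you indicate how the factors should arise is not how the computation actually goes. The paper proves (\cref{prop:dim2,prop:tqdr1,prop:tqd}) that for generic $P_1,P_2$ the set $\tqd$ consists of a \emph{single} tropical curve, and that the multiplicity of that one curve already equals $\prod_i|G_i|\,\big(\prod_{i,j}(\mathsf{w}_X)^{(i)}_j\big)\mathsf{d}^{\mathsf{n}_X}$: the factor $d_i^{n_i}$ appears because each of the $n_i$ bounded edges in block $i$ carries weight $(\mathsf{w}_X)^{(i)}_j d_i$, and these weights multiply under the iterated contractions $\ell_k$ along the flow to the sink, with the final identity $|\Delta(1)\wedge\cdots\wedge\Delta(n)|=\mathsf{w}_{n+1}|\det B|$ supplying the last weight and the lattice index $\prod_i|G_i|$. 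Attributing $d_i^{n_i}$ to ``the number of admissible placements'' presupposes a sum over several combinatorial types, which does not occur and would in any case require an enumeration you do not supply; as written, the closed form is asserted rather than derived. A secondary issue: your fallback of passing to the smooth cover $\prod_i\bbP^{n_i}$ and ``recording the lattice index'' is not justified --- the log Gromov--Witten invariants of the quotient are not obtained from those of the cover by dividing by the index in any obvious way. The paper instead works directly in $N$, absorbs $\prod_i|G_i|$ into $|\det B|$ inside the multiplicity, and justifies applying \cite{ManRu} beyond the smooth case by observing that the relevant curves never meet the deeper toric strata.
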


We write $\overset{\circ}{\prod}_j e^X_j(\mathsf{d})$ to mean the product of $e^X_j(\mathsf{d})$ over $j\in\{1,\dots,|\mathsf{n}_X|+r_X \, \big{|} \, e^X_j(\mathsf{d})\neq0\}$.

\begin{thm}\label{thm:main_loc}
Let $(X,D)$ be a nef toric pair and let $\mathsf{d}$ be an effective curve class on $X$. Then
\bea
\mathfrak{p}^X_\mathsf{d} &=&
\frac{(-1)^{e^X(\mathsf{d})-n_X-r_X}}{\overset{\circ}{\prod}_j e^X_j(\mathsf{d})} \, , \\
\mathfrak{q}^X_\mathsf{d}
&=& \prod_{i=1}^{r_X} \big{|}G_i\big{|} \,
 \l(\prod_{i,j}
\big(\mathsf{w}_X\big)^{(i)}_j\r)
\mathsf{d}^{\mathsf{n}_X}\mathfrak{p}^X_\mathsf{d} \nn \\
&=& \prod_{i=1}^{r_X} \big{|}G_i\big{|} \,
 \l(\prod_{i,j} \big(\mathsf{w}_X\big)^{(i)}_j\r) \mathsf{d}^{\mathsf{n}_X} \, \frac{(-1)^{e^X(\mathsf{d})-n_X-r_X}}{\overset{\circ}{\prod}_j e^X_j(\mathsf{d})}.
\eea
\end{thm}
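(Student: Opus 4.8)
The plan is to compute the two local descendent invariants \eqref{eq:pd} and \eqref{eq:qd} by packaging them into Givental's genus-$0$ formalism for the toric orbifold $\cX$ of \cref{sec:loc} twisted by $\bigoplus_j\cO_\cX(-D_j)$, and then invoking the mirror theorem for toric Deligne--Mumford stacks. By \cref{classtoric}, $\cX=\prod_{i=1}^{r_X}\bbP^{G_i}(\mathsf{w}_X^{(i)})$, and since the toric boundary of a product is the disjoint union of the pullbacks of the boundaries of the factors, we get $X_D^{\rm loc}=\prod_i (X_i)^{\rm loc}_{D^{(i)}}$; consequently every quantity on the right-hand sides of \cref{thm:main_loc} — the sign $(-1)^{e^X(\mathsf{d})-n_X-r_X}$, the product $\overset{\circ}{\prod}_j e^X_j(\mathsf{d})$, and the factor $\prod_i|G_i|\,(\prod_{i,j}(\mathsf{w}_X)^{(i)}_j)\,\mathsf{d}^{\mathsf{n}_X}$ — is multiplicative over the factors, and the (equivariant, twisted) $I$-function of $\cX$ is an external product of those of the individual fake weighted projective spaces, so the computation can be organised factor by factor. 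Moreover $X_D^{\rm loc}$ is Calabi--Yau, since $K_{X_D^{\rm loc}}=\pi^*(K_X+\sum_j D_j)=0$, so all equivariant invariants have finite non-equivariant limits and it is enough to extract these.

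For $\mathfrak{p}^X_\mathsf{d}$: the insertion $\ev_1^*([\mathrm{pt}])\cup\psi_1^{n_X+r_X-2}$ is exactly the one that pairs the top admissible power of $\psi_1$ against the point class, so, using the presentation of $[\mathrm{pt}]$ as a multiple of $H^{\mathsf{n}_X}$ and the fact that the Poincar\'e dual of $[\mathrm{pt}]$ is the unit, $\sum_\mathsf{d} q^\mathsf{d}\mathfrak{p}^X_\mathsf{d}$ is read off as a single coefficient (that of $z^{-(n_X+r_X)}$) of the unit component of the twisted $J$-function of $\cX$. One writes down the twisted $I$-function explicitly from the GIT weights $Q^X_{ij}$ together with the Coates--Givental inverse-Euler modification for the concave bundle $\bigoplus_j\cO_\cX(-D_j)$, restricted to the zero-age sector; then, applying the toric mirror theorem, inverting the mirror map, and specialising to the unit component, each coefficient collapses, after the non-equivariant limit, to the claimed closed form $\mathfrak{p}^X_\mathsf{d}=(-1)^{e^X(\mathsf{d})-n_X-r_X}/\overset{\circ}{\prod}_j e^X_j(\mathsf{d})$. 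The degenerate case in which $\mathsf{d}\cdot D_j=0$ for some $j$ — equivalently $\mathsf{d}$ has vanishing degree in some factor — is then handled by the same factorisation, after evaluating directly the contribution of a contracted factor.

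For $\mathfrak{q}^X_\mathsf{d}$: the second point insertion takes us beyond the small $J$-function, and the aim is to reduce the computation to $\mathfrak{p}^X_\mathsf{d}$. This is achieved by combining the divisor, string and dilaton equations and the genus-$0$ topological recursion relations with a reconstruction result recovering the big quantum cohomology of $\cX$ — and hence its $2$-pointed descendents — from the small $J$-function, which is available because $H^\bullet(X,\bbC)$ is generated by the divisor classes $H_i$. Writing $[\mathrm{pt}]$ as $\prod_i|G_i|\,(\prod_{i,j}(\mathsf{w}_X)^{(i)}_j)$ times $H^{\mathsf{n}_X}$ and tracking the $H_i$ through these relations produces exactly the numerical factor $\prod_i|G_i|\,(\prod_{i,j}(\mathsf{w}_X)^{(i)}_j)\,\mathsf{d}^{\mathsf{n}_X}$ multiplying $\mathfrak{p}^X_\mathsf{d}$; this is the same factor, obtained by the same manipulation, that turns $R\mathfrak{p}^X_\mathsf{d}=1$ into $R\mathfrak{q}^X_\mathsf{d}$ in \cref{thm:main_log}, and it gives the asserted formula.

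The step I expect to be the main obstacle is the first half of the second paragraph: setting up the twisted $I$-function correctly in the orbifold and equivariant, non-semipositive setting — keeping track of the finite groups $G_i$, the restriction to the untwisted sector, and the mirror map — and then carrying out the coefficient extraction and the non-equivariant limit so that the resulting series collapses precisely to the stated closed form; that combinatorial collapse is where the real work lies. By contrast, once the $\mathfrak{p}$-formula and the reconstruction input are in place, the passage to $\mathfrak{q}^X_\mathsf{d}$ is essentially bookkeeping, exactly paralleling the log side.
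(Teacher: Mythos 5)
Your proposal follows essentially the same route as the paper: the twisted orbifold $I$-function together with the mirror theorem for toric Deligne--Mumford stacks (with trivial mirror map) gives $\mathfrak{p}^X_\mathsf{d}$ as the $z^{-(n_X+r_X)}$ coefficient of the unit component of the small $J$-function, and $\mathfrak{q}^X_\mathsf{d}$ is then obtained by reconstructing the relevant two-point descendents from the small $J$-function using that $H^\bullet(X,\bbC)$ is generated by the divisor classes $H_i$ (the paper implements this via flatness for the Dubrovin connection rather than TRR, but it is the same mechanism, and the factor $\prod_i|G_i|\,\bigl(\prod_{i,j}(\mathsf{w}_X)^{(i)}_j\bigr)\mathsf{d}^{\mathsf{n}_X}$ arises exactly as you describe). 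The approach is correct.
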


We deduce from these the log-local principle proved in the present paper.

\begin{thm}\label{thm:main}
The log-local principle holds for nef toric pairs $(X,D)$ with descendent point insertions 
and with no assumptions on $\mathsf{d}\cdot D_j$. 
That is,
for every effective curve class $\mathsf{d}$, the log and local invariants are equal up to the factor 
\[
\prod_{j=1}^{l_D} (-1)^{\mathsf{d}\cdot D_j +1}\, \mathsf{d}\cdot D_j =  (-1)^{e^X(\mathsf{d})-|\mathsf{n}_X|-r_X} \prod_{j=1}^{|\mathsf{n}_X|+r_X} e^X_j(\mathsf{d}) \, .
\]
\end{thm}

\cref{thm:main} is a direct corollary of the combination of \cref{thm:ponen,thm:main_log,thm:main_loc}.
We will prove \cref{thm:main_log} using a  tropical
correspondence principle, and 
\cref{thm:main_loc} using an equivariant mirror theorem. We review these
technical tools in \cref{sec:review}, and explain how to apply them to the
proofs of \cref{thm:main_log,thm:main_loc} in \cref{sec:proof_log,sec:proof_loc} respectively.

\section{Computational methods}
\label{sec:review}

\subsection{The log side: tropical curve counts}
\label{sec:troprev}

Let $X=\prod_{i=1}^{r_X} \bbP^{G_i}(\mathsf{w}^{(i)})$ as in \cref{sec:setup}
and let $\Sigma\subset N_\RR$ be the fan of $X=X_\Sigma$;
here $N\simeq\ZZ^{|\mathsf{n}_X|}$ and $N_\RR:=N\otimes_\ZZ\RR$. 
Define furthermore $N_\QQ:=N\otimes_\ZZ\QQ$ and let $M:=\Hom(N,\ZZ)$ 
be the dual of $N$. 
Denote by $[D_1],\dots,[D_{|\mathsf{n}_X|+r_X}]$ the rays of $\Sigma$ corresponding to the irreducible effective toric divisors of $X$. 
We use correspondence results with tropical curve counts as developed in
\cite{Mikh05,NiSi,ManRu} (see \cite{vGOR} for an introduction) and state them
in the generality needed for our purposes.

Denote by $\ol{\Gamma}$ the topological realisation of a finite connected graph and by $\Gamma$ the complement of a subset of 1-valent vertices. We require that $\Gamma$ has no univalent and no bivalent vertices. The set of its vertices, edges, non-compact edges and compact edges is denoted by $\Gamma^{[0]}$, $\Gamma^{[1]}$, $\Gamma^{[1]}_\infty$ and $\Gamma^{[1]}_c$ respectively. $\Gamma$ comes with a weight function $w : \Gamma^{[1]} \to \ZZ_{\geq 0}$. The non-compact edges come with markings. Weight 0, resp.\ positive weight, non-compact edges are \emph{interior}, resp.\ \emph{exterior}, markings. There will be 1 or 2 interior point markings, which we denote by $P_1$ and $P_2$, and $|\mathsf{n}_X|+r_X$ exterior markings corresponding to the toric divisors, which we denote by $[D_1],\dots,[D_{|\mathsf{n}_X|+r_X}]$ as well.

\begin{defn}
A \emph{genus 0 degree $\mathsf{d}$ maximally tangent parametrised marked tropical curve} in $X$ consists of $\Gamma$ as above and a 
continuous map $h : \Gamma \to N_\RR$ satisfying
\begin{enumerate}
\item For $E\in\Gamma^{[1]}$, $h|_E$ is constant if and only if $w(E)=0$. Otherwise, $h|_E$ is a proper embedding into an affine line with rational slope.
\item Let $V\in\Gamma^{[0]}$ with $h(V)\in N_\QQ$. For edges $E\ni V$, denote by $u_{(V,E)}$ the primitive integral vector at $h(V)$ into the direction $h(E)$ (and set $u_{(V,E)}=0$ if $w(E)=0$). The \emph{balancing condition} holds:
\[
\sum_{E\ni V} w(E)\,u_{(V,E)}=0.
\]
\item For each exterior marking $D_j$, $h|_{D_j}$ is parallel to the ray $[D_j]$ and $w(D_j)=\mathsf{d}\cdot D_j$.
\item The first Betti number $b_1(\Gamma)=0$.
\end{enumerate}

If $(\Gamma',h')$ is another such parametrised tropical curve, then an  \emph{isomorphism} between the two is given by a homeomorphism $\Phi:\Gamma\to\Gamma'$ respecting the discrete data and such that $h=h'\circ\Phi$. A \emph{genus 0 degree $\mathsf{d}$ maximally tangent marked tropical curve} then is an isomorphism class of such.

Moreover, we say that an interior marking $E$ satisfies a \emph{$\psi^k$-condition} if $h(E)$ is a $k+2$-valent vertex.
\end{defn}

Denote by $\tpd$ the (moduli) space of genus 0 degree $\mathsf{d}$ maximally tangent tropical curves in $X$ with the interior marking equipped with a $\psi^{|\mathsf{n}_X|+r_X-2}$-condition passing through a fixed general point in $\RR^{|\mathsf{n}_X|+r_X}$. Denote by $\tqd$ the moduli space of genus 0 degree $\mathsf{d}$ maximally tangent tropical curves in $X$ with the two interior markings $P_1$ and $P_2$ mapping to two fixed general points in $\RR^{|\mathsf{n}_X|+r_X}$ and such that $P_2$ has a $\psi^{r_X-1}$-condition. We will see in \cref{prop:tpd,prop:tqd} that each of $\tpd$ and $\tqd$ consist of one element. Since $\tpd$ and $\tqd$ are finite hence, their elements are \emph{rigid} \cite[Definition 2.5]{ManRu19}.

Counts of tropical curves are weighted with appropriate multiplicities. There are a number of ways of defining the multiplicity $\Mult(\Gamma)$ of $\Gamma$. The version we use was formulated (for $X$ smooth) in \cite[Theorem 1.2]{ManRu19}. We state it for our setting. Set $A:=\ZZ[N]\otimes_\ZZ\Lambda^\bullet M$. For $n\in N$ and $\alpha\in\Lambda^\bullet M$, write $z^n\alpha$ for $z^n\otimes\alpha$ and $\iota_n\alpha$ for the contraction of $\alpha$ by $n$. Recall that if $\alpha\in\Lambda^s M$, then $\iota_n\alpha\in\Lambda^{s-1} M$. For $k\geq1$, define $\ell_k:A^{\otimes k}\to A$ via
\[
\ell_k(z^{n_1}\alpha_1\otimes\dots\otimes z^{n_k}\alpha_k):=z^{n_1+\,\cdots\, +n_k} \iota_{n_1+\,\cdots\, +n_k} (\alpha_1\wedge\cdots\wedge\alpha_k).
\]
Let now $h:\Gamma\to N_\RR$ be in $\tpd$ or $\tqd$ and choose a vertex $V_\infty$ of $\Gamma$. Consider the flow on $\Gamma$ with sink vertex $V_\infty$. To each edge $E$ of $\Gamma$, we inductively associate an element $\zeta_E=z^{n_E}\alpha_E\in A$, well-defined up to sign:
\begin{itemize} 
\item For the exterior markings, set $\zeta_{D_j}=z^{w(D_j)\Delta(j)}$, where $\Delta(j)$ is the primitive generator of $[D_j]$.
\item For an interior marking $P$, set $\zeta_P$ to be one of the two generators of $\Lambda^{|\mathsf{n}_X|}M$.
\item If $E_1,\dots,E_k$ are the edges flowing into a vertex $V\neq V_\infty$ and $E_{\text{out}}$ is the edge flowing out, set $\zeta_{E_{\text{out}}}=\ell_k({\zeta_{E_1}\otimes\cdots\otimes\zeta_{E_k}})$.
\end{itemize}
By \cite[Theorem 1.2]{ManRu19}, $\zeta_\Gamma:=\prod_{E\ni V_\infty}
\zeta_E\in z^0\otimes\Lambda^{|\mathsf{n}_X|}M$ and $\Mult(\Gamma)$ is the
index of $\zeta_\Gamma$ in $\Lambda^{|\mathsf{n}_X|}M$. It then follows from
\cite[Theorem 1.1]{ManRu} that $R\mathfrak{p}^X_{\mathsf{d}}$ is the number of
$\Gamma$ in $\tpd$ counted with multiplicity $\Mult(\Gamma)$, and
$R\mathfrak{q}^X_{\mathsf{d}}$ is the weighted cardinality of $\{\Gamma \in \tqd\}$, each weighted by $\Mult(\Gamma)$.

\begin{rem} Note that a priori \cite[Theorem 1.1]{ManRu} is stated for smooth varieties; in the cases of interest to us, however, the curves never meet the deeper toric strata and the arguments of \cite{ManRu} carry through.
\end{rem}

\subsection{The local side: mirror symmetry for toric stacks}
\label{sec:cg}

The second technical result we will use for the calculation of local Gromov-Witten invariants is
\cref{thm:I=J} below.
Consider
a torus $T\simeq \bbC^\star$ acting on $X^{\rm
  loc}_D \coloneqq\mathrm{Tot}\left(\bigoplus_i \cO_{X}(-D_i)\right)$ transitively on the fibres
and covering the trivial action on the image of the zero section. We will
denote by 
$\lambda\coloneqq c_1(\cO_{\bbP^\infty}(1))$ the polynomial generator of the
$T$-equivariant cohomology of a point, $\hhh_T(\mathrm{pt})=\hhh(BT)\simeq
\bbC[\lambda]$.
The basis elements $H^{\mathsf{l}}$ of \cref{sec:setup} for the cohomology of
$X$ have canonical $T$-equivariant lifts, which by a slight abuse of notation we denote with the same
symbol, to cohomology classes in
$X^{\rm loc}_D$ forming a $\bbC(\lambda)$ basis of
$\hhh_T(X^{\rm loc}_D)$, where as usual
$\bbC(\lambda)$ is the field of fractions of $\hhh_T(\mathrm{pt})$.
The $T$-equivariant cohomology 
$\hhh_T(X^{\rm loc}_D)$ is furthermore endowed with a non-degenerate,
symmetric bilinear form given by the restriction of the $T$-equivariant Chen--Ruan \cite[Section 2.1]{MR3414388} pairing on the untwisted
component of the inertia stack of $\cX_D^{\rm loc}$,
\beq
\eta_{\mathsf{l} \mathsf{m}} := (H^{\mathsf{l}}, H^{\mathsf{m}})_{X_D^{\rm loc}} := \int_{X} \frac{H^{\mathsf{l}} \cup
H^{\mathsf{m}}}{\cup_i \mathrm{e}_T(\cO_X(-D_i))},
\label{eq:grampd}
\eeq
where $\mathrm{e}_T$ denotes the $T$-equivariant Euler class.

Let now $\tau \in \hhh_T(X_D^{\rm loc})$. The {\it equivariant big
  $J$-function} of $X_D^{\rm loc}$ is the formal power series  
\beq
J_{\rm big}^{X^{\rm loc}_D}(\tau, z):= z+\tau+\sum_{\mathsf{d}\in
  \mathrm{NE}(X)}\sum_{n\in \bbZ^+}\sum_{\mathsf{l},\mathsf{m} \preceq \mathsf{n}_X} \frac{1}{n!}\bra \tau, \dots, \tau, \frac{H^{\mathsf{l}}}{z-\psi}
\ket_{0,n+1,\mathsf{d}}^{X_D^{\rm loc}} H^{\mathsf{m}} \eta^{\mathsf{l} \mathsf{m}} ,
\eeq
where we employed the usual correlator notation for Gromov-Witten invariants,
\beq
\bra \tau_1 \psi_1^{k_1}, \dots, \tau_n \psi^{k_n}_n\ket_{0,n,\mathsf{d}}^{X^{\rm
    loc}}:=\int_{[\ol{\mmm}_{0,m}(X^{\rm loc}_D,\mathsf{d})]^{\rm vir}} \prod_i \ev^*_i(\tau_i) \psi_i^{k_i},
\eeq
and $\eta^{\mathsf{l} \mathsf{m}} := (\eta^{-1})_{\mathsf{l}
  \mathsf{m}}$. Restriction to $t=t_0 \mathbf{1}_{H(X)} + \sum_{i=1}^{r_X} t_i H_i$ and use of the Divisor
Axiom leads to the {\it equivariant small $J$-function} of $X^{\rm
  loc}_D$,
\beq
J_{\rm small}^{X_D^{\rm loc}}(t, z):= z  \re^{\sum t_i \phi_i/z}\l(1+\sum_{\mathsf{d}\in
  \mathrm{NE}(X)} \sum_{\mathsf{l},\mathsf{m} \preceq \mathsf{n}_X} \re^{\sum t_i d_i} \bra \frac{H^{\mathsf{l}}}{z(z-\psi_1)}
\ket_{0,1,\mathsf{d}}^{X_D^{\rm loc}}  H^{\mathsf{m}} \eta^{\mathsf{l} \mathsf{m}}\r).
\label{eq:Jsmall}
\eeq
The $n$-pointed genus zero Gromov--Witten invariants with one marked descendant
insertion (respectively, the $1$-pointed genus zero descendant invariants) of
$X_D^{\rm loc}$, and no twisted insertions, can thus be read off
from the formal Taylor series expansion of $J_{\rm big}$ (resp., $J_{\rm
  small}$) at $z=\infty$.

The following theorem provides an explicit hypergeometric presentation of
$J_{\rm small}^{X_D^{\rm loc}}(t, z)$. Let $\kappa_j:=c_1(\cO(-D_j))$ be the $T$-equivariant first Chern
class of $\cO(D_j)$
and $y_i\in \mathrm{Spec}\bbC[[t]]$, $i=1, \dots, r_X$ be variables in a
formal disk around the origin. Writing $(x)_n:=\Gamma(x+n)/\Gamma(x)$ for the
Pochhammer symbol of $(x,n)$ with $n\in \bbZ$, the {\it $T$-equivariant $I$-functions} of  $X$ and $X_D^{\rm
  loc}$ are defined as the $\hhh_T(X)$ and $\hhh_T(X^{\rm loc}_D)$ valued Laurent series
\bea
\label{eq:Ifun}
  I^{X}(y,z) &:=&  z \mathbf{1}_{H(X)} +\prod_i y_i^{H_i/z} \sum_{\mathsf{d}\in \mathrm{NE}(X)}
  \prod_i y_i^{d_i} z^{\mathsf{d}\cdot K_X} \frac{1}{\prod_j \l(\frac{\kappa_j}{z}
    +1\r)_{\mathsf{d}\cdot D_j}},  \\
I^{X_D^{\rm loc}}(y,z) &:= & z \mathbf{1}_{H(X)} + \prod_i y_i^{H_i/z}
\sum_{\mathsf{d}\in \mathrm{NE}(X)}
  \prod_i y_i^{d_i} z^{\mathsf{d}\cdot(K_X+D)-l_D} \frac{\prod_j \kappa_j
    \l(\frac{\kappa_j}{z}+1\r)_{\mathsf{d}\cdot D_j-1}}{\prod_j \l(\frac{\kappa_j}{z}
    +1\r)_{\mathsf{d}\cdot D_j}}
  \label{eq:Ifunloc}
\eea
and their {\it mirror maps} as their formal $\cO(z^0)$ coefficient,
\begin{align}
  \tilde{t}^i_X(y) := & \big[z^0 H_i \big] I^{X}(y,z), \nn \\
  \tilde{t}^i_{X_D^{\rm loc}}(y) := & \big[z^0 H_i \big] I^{X_D^{\rm
      loc}}(y,z).
  \label{eq:mm}
\end{align}

Note that $\cX$ and $\cX_D^{\rm loc}$ are smooth toric Deligne--Mumford
stacks with coarse moduli schemes $X$ and $X_D^{\rm loc}$ that are
projective over their affinisation, and at this level of generality a result
of \cite{MR3414388} can be applied to provide a
Givental-style equivariant mirror statement for them, as follows. In the language of \cite{MR3414388}, the $I$-functions \eqref{eq:Ifun} and \eqref{eq:Ifunloc} are the
stacky $I$-functions of \cite[Definition~28 and 29]{MR3414388} for $\cX$ and
$\cX_D^{\rm loc}$ respectively, restricted
to insertions in the zero-age sector of their inertia stack. The main result
of \cite{MR3414388} identifies the small
$J$-function of a semi-projective toric Deligne--Mumford stack to its stacky
$I$-function, up to a change-of-variables given by its $\cO(z^0)$ term as in \eqref{eq:mm}. In
particular, the following statement is a projection to the untwisted
sector of $\cX$ and $\cX_D^{\rm loc}$ of \cite[Theorem~31 and Corollary 32]{MR3414388}.

\begin{thm}[\cite{MR3414388}]
  We have
  \begin{align}
    J_{\rm small}^{X}(\tilde t_X(y), z) = & I^{X}(y,z), \nn \\
    J_{\rm small}^{X_D^{\rm loc}}\l(\tilde t_X(y)+\tilde t_{X_D^{\rm loc}}(y)-\log
    y,
    z\r) = & I^{X_D^{\rm loc}}(y,z).
    \end{align}
\label{thm:I=J}
\end{thm}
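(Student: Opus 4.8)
The plan is to deduce both identities by specialising the toric mirror theorem of \cite{MR3414388} to the smooth toric Deligne--Mumford stacks $\cX$ and $\cX_D^{\rm loc}$ and then projecting the resulting equalities of Chen--Ruan-valued functions onto the zero-age sector. First I would check that the hypotheses of \cite{MR3414388} are met: by \cref{classtoric} the stacks $\cX$ and $\cX_D^{\rm loc}$ are geometric quotients of open subsets of affine space by groups of the form $\bbC^\star\times G_i$, and since $X$ is projective and each $\cO_X(-D_i)$ has global sections (the $D_i$ being nef) the coarse spaces $X$ and $X_D^{\rm loc}$ are projective over their affinisations, so both are semi-projective toric stacks in the sense of loc.\ cit.

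The next step is to identify \eqref{eq:Ifun} and \eqref{eq:Ifunloc}, restricted to the zero-age component of the inertia stack, with the stacky $I$-functions of \cite[Definition~28 and 29]{MR3414388} for $\cX$ and $\cX_D^{\rm loc}$. For $\cX$ this amounts to reading off the combinatorial input: the rays of $\Sigma$ are the $D_j$, the torus weights are the $Q^X_{ij}$, and $\mathsf{d}\cdot K_X=-\sum_j\mathsf{d}\cdot D_j$, so the hypergeometric product of \cite{MR3414388} collapses to the factor $z^{\mathsf{d}\cdot K_X}\prod_j(\kappa_j/z+1)_{\mathsf{d}\cdot D_j}^{-1}$ of \eqref{eq:Ifun}. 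For $\cX_D^{\rm loc}$ the fan acquires $l_D$ further ``fibre'' rays whose divisor classes are the $\kappa_j$; since $\mathsf{d}$ pairs non-positively with each such ray, the corresponding factors move to the numerator and assemble into $\prod_j\kappa_j(\kappa_j/z+1)_{\mathsf{d}\cdot D_j-1}$, with the $z$-exponent becoming $\mathsf{d}\cdot(K_X+D)-l_D$ — this is \eqref{eq:Ifunloc}. Equivalently, this is the Coates--Givental modification of $I^{X}$ by $\bigoplus_j\cO(-D_j)$, which is precisely the bridge to the local theory of $X_D^{\rm loc}$.

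With this dictionary in place, I would apply \cite[Theorem~31 and Corollary~32]{MR3414388} to $\cX$ and to $\cX_D^{\rm loc}$ and project onto the zero-age sector. On the $I$-side this reproduces \eqref{eq:Ifun} and \eqref{eq:Ifunloc} by the previous step; on the $J$-side, since $[\ol{\mmm}_{0,m}(X,\mathsf{d})]^{\rm vir}$ and $[\ol{\mmm}_{0,m}(X_D^{\rm loc},\mathsf{d})]^{\rm vir}$ are by construction the zero-age restrictions of the stacky virtual classes, the zero-age projection of the stacky small $J$-function is exactly the series $J_{\rm small}^{X}$, resp.\ $J_{\rm small}^{X_D^{\rm loc}}$, of \eqref{eq:Jsmall}. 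It then remains to recognise the zero-age part of the change of variables of \cite{MR3414388}: for $\cX$ it is by definition the $\cO(z^0)$-coefficient $\tilde t_X(y)$ of \eqref{eq:mm}, giving the first identity; for $\cX_D^{\rm loc}$ one computes it from the $\cO(z^0)$-coefficient of $I^{X_D^{\rm loc}}$, and splitting that coefficient according to the ambient and the fibre rays while peeling off the prefactor $\prod_i y_i^{H_i/z}$ (which contributes the $-\log y$) organises it as $\tilde t_X(y)+\tilde t_{X_D^{\rm loc}}(y)-\log y$, giving the second identity.

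The main obstacle is the bookkeeping underlying the two identifications above: one has to reconcile the normalisations of \cite{MR3414388} — the precise prefactor and leading term of the stacky $I$-function, the $T$-equivariant lifts of the $H$-classes and of the $\kappa_j$ (including their fibrewise $T$-weights), the Chen--Ruan pairing \eqref{eq:grampd}, and the exact shape of the mirror map — with the explicit hypergeometric series \eqref{eq:Ifun}--\eqref{eq:Ifunloc}, and to verify that passing to the zero-age sector is compatible on both the $I$- and the $J$-sides, i.e.\ that twisted-sector insertions never contribute to the untwisted invariants \eqref{eq:pd}, \eqref{eq:qd}. Once these conventions are aligned, the two displayed equalities follow at once from \cite[Theorem~31 and Corollary~32]{MR3414388}.
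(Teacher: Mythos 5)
Your proposal is correct and takes essentially the same route as the paper, which likewise verifies that $\cX$ and $\cX_D^{\rm loc}$ are semi-projective smooth toric Deligne--Mumford stacks, identifies \eqref{eq:Ifun} and \eqref{eq:Ifunloc} with the stacky $I$-functions of \cite{MR3414388} restricted to zero-age insertions, and then invokes Theorem~31 and Corollary~32 of that reference projected to the untwisted sector, with the mirror maps read off from the $\cO(z^0)$ coefficients as in \eqref{eq:mm}. (One cosmetic slip: it is $\cO_X(D_i)$, not $\cO_X(-D_i)$, that has global sections; the relevant point --- projectivity of the coarse spaces over their affinisations --- is unaffected.)
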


\section{The log side: proof of \cref{thm:main_log}}
\label{sec:proof_log}

Assume first that there is $j$ such that $\mathsf{d} \cdot D_j =0$.
Given that
\[
X=\prod_{i=1}^{r_X} \bbP^{G_i}\left(\mathsf{w}_X^{(i)}\right)
\]
is given its toric boundary, each $D_j$ is of the form (up to reordering of the factors) 
\[
D^j \times \prod_{i\neq k} \bbP^{G_i}\left(\mathsf{w}_X^{(i)}\right)
\]
for some $k$ and with $D^j$ a prime toric divisor in $\bbP^{G_k}\left(\mathsf{w}_X^{(k)}\right)$.
As $d=(d_i)_i$, $d\cdot D_j=0$ implies by ampleness of $D^j$ that $d_k\cdot D^j=0$ and thus $d_k=0$.
This means that each genus 0 degree $\mathsf{d}$ maximally tangent stable log map factors through
\[
\bbP^{G_k}\left(\mathsf{w}_X^{(k)}\right) \; \times \; \prod_{i\neq k} \bbP^{G_i}\left(\mathsf{w}_X^{(i)}\right),
\]
and is trivial on the first component. By the log product formula \cite{Her19,Ran19}, the invariant reduces to the corresponding invariant of $\prod_{i\neq k} \bbP^{G_i}\left(\mathsf{w}_X^{(i)}\right)$. This moduli problem however is in positive virtual dimension 
and thus
$R\mathfrak{p}^X_{\mathsf{d}} = R\mathfrak{q}^X_{\mathsf{d}} =0$. For the remainder of this section, we therefore assume that $\mathsf{d} \cdot D_j >0$ for all $1 \leq j \leq l_D$.

Moving to the general case with 1 or 2 point insertions, recall that $X=\prod_{i=1}^{r_X} \bbP^{G_i}(\mathsf{w}^{(i)})$ 
is given by the fan $\Sigma\subset N_\RR$
where $N\simeq\ZZ^{|\mathsf{n}_X|}$ and $N_\RR=N\otimes_\ZZ\RR$. 
Then $\Sigma$ is the product fan of the fans $\Sigma_i\subset (N_i)_\RR$ of $\bbP^{G_i}(\mathsf{w}^{(i)})$, where $N_i\simeq\ZZ^{\mathsf{n}_i}$. Writing $\varepsilon_i:=\sum_{k=1}^{i-1} (n_k+1)$, the rays of $\Sigma_i$ are $[D_{\varepsilon_i+1}],\dots,[D_{\varepsilon_{i}+n_i+1}]$ and have primitive generators $\Delta(\varepsilon_i+1),\dots,\Delta(\varepsilon_i+n_i+1)$, which satisfy
\[
\mathsf{w}_{\varepsilon_i+1}^{(i)}\Delta(\varepsilon_i+1)+\cdots+\mathsf{w}_{\varepsilon_i+n_i+1}^{(i)}\Delta(\varepsilon_i+n_i+1)=0.
\]
Write $L_i$ for the sublattice of $N_i$ generated by the $[\Delta(\varepsilon_i+j)]$ and write $B_i$ for the change of basis matrix from a $\ZZ$-basis of $N_i$ to a $\ZZ$-basis of $L_i$. Then
\[
|\det B_i|=\big{|}N_i/L_i\big{|}=\big{|}G_i\big{|}.
\]
Let $L$ be the sublattice of $N$ generated by the $L_i$ and let $B$ be the change of basis matrix from $N$ to $L$ given by the $B_i$. We have that $|\det B|=\prod_{i=1}^{r_X}|\det B^i|=\prod_{i=1}^{r_X}|G_i|.$

\begin{prop}\label{prop:tpd}
The set $\tpd$ has an unique element $\Gamma$ of multiplicity $1$.
\end{prop}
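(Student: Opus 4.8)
The plan is to exhibit a canonical tropical curve in $\tpd$ and show both that it is the only one and that its multiplicity equals $1$. Since $X=\prod_{i=1}^{r_X}\bbP^{G_i}(\mathsf{w}^{(i)})$ and $\Sigma$ is the product of the fans $\Sigma_i$, the combinatorics decomposes: a maximally tangent tropical curve of degree $\mathsf{d}=(d_1,\dots,d_{r_X})$ should, up to the interior marking, be the "fan tropical line" glued from $r_X$ pieces, where the $i^{\rm th}$ piece has an unbounded edge in each ray direction $\Delta(\varepsilon_i+j)$ with weight $w(D_{\varepsilon_i+j})=\mathsf{d}\cdot D_{\varepsilon_i+j}=e^X_{\varepsilon_i+j}(\mathsf{d})$. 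The balancing condition at the central vertex of the $i^{\rm th}$ piece is exactly the relation $\sum_j \mathsf{w}^{(i)}_{\varepsilon_i+j}\Delta(\varepsilon_i+j)=0$ scaled by the $d_i$, so it is automatically satisfied. First I would set up this explicit $\Gamma$: a single vertex $V$ mapping to the prescribed general point, with $|\mathsf{n}_X|+r_X$ unbounded exterior edges of the above weights attached, plus the interior marking $P_1$ attached at $V$; the valence of $V$ is then $|\mathsf{n}_X|+r_X+1$, matching the required $\psi^{|\mathsf{n}_X|+r_X-2}$-condition ($k+2$-valent with $k=|\mathsf{n}_X|+r_X-1$... ) — I need to be careful here that the bookkeeping of the valence against the $\psi$-power is consistent with the definition, and adjust the shape (possibly the interior edge flows to $V$ rather than being attached at $V$) accordingly.

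Next I would prove uniqueness. Because $b_1(\Gamma)=0$ and the only vertices carrying the balancing condition are mapped to $N_\QQ$, a genus $0$ maximally tangent tropical curve is a tree whose unbounded edges are forced (one in each ray direction with the degree-determined weight, plus the interior marking). A dimension count — the moduli space $\tpd$ is cut out by requiring the interior marking to map to a fixed general point in $\RR^{|\mathsf{n}_X|+r_X}$, and $\mathfrak{vdim}$ of the corresponding log moduli problem is $|\mathsf{n}_X|+r_X-2$, matched exactly by the $\psi$-condition — shows the space is $0$-dimensional, so every curve in it is rigid. Rigidity together with the tree structure forces all bounded edges to be contracted or to have the combinatorial type above; the product structure of $\Sigma$ then forces the curve to be the fan line through the marked point, componentwise. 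I expect this to be the cleanest route, possibly invoking the explicit description of rigid tropical curves in \cite[Definition 2.5]{ManRu19} already cited in the text.

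Finally, the multiplicity. I would run the algorithm of \cref{sec:troprev}: choose $V_\infty$ to be (or to be adjacent to) the vertex $V$, assign $\zeta_{D_j}=z^{w(D_j)\Delta(j)}$ to each exterior edge and $\zeta_{P_1}$ a generator of $\Lambda^{|\mathsf{n}_X|}M$ to the interior edge, flow everything into $V_\infty$, and compute $\zeta_\Gamma=\prod_{E\ni V_\infty}\zeta_E$ via the operations $\ell_k$. Because $\sum_j w(D_j)\Delta(j)=\sum_j e^X_j(\mathsf{d})\Delta(j)=0$ by the balancing relations, the $z$-exponent collapses to $z^0$ as predicted, and $\zeta_\Gamma$ lands in $z^0\otimes\Lambda^{|\mathsf{n}_X|}M$; since the interior marking already contributes a full generator of $\Lambda^{|\mathsf{n}_X|}M$ and the contraction $\iota_{n_1+\cdots+n_k}$ at $V$ only re-wedges with $z^n$-monomials, no extra index is introduced, so $\zeta_\Gamma$ is itself a generator and $\Mult(\Gamma)=1$. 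The main obstacle, I anticipate, is not any single step but the careful matching of conventions — the precise shape of $\Gamma$ near the interior marking versus the $\psi$-power, and tracking that the multiplicity computation really yields index $1$ rather than some determinant of the weights (the latter is exactly what will appear in $\tqd$, where the second point breaks the symmetry); getting the present case to give $1$ on the nose requires that the interior marking be the only "new" direction and sit at the sink.
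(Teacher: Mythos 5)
Your proposal is correct and follows essentially the same route as the paper: the $\psi^{|\mathsf{n}_X|+r_X-2}$-condition forces the marked vertex to be $|\mathsf{n}_X|+r_X$-valent, which (since that is the total number of exterior markings and $\Gamma$ is a tree) pins $\Gamma$ down as the translate of the rays of $\Sigma$ through the fixed point, and the multiplicity is the index of $\zeta\prod_j z^{e^X_j(\mathsf{d})\Delta(j)}=\zeta$ in $\Lambda^{|\mathsf{n}_X|}M$, namely $1$. The valency bookkeeping you flag is resolved exactly as you suspect: the contracted interior marking does not contribute to the valency of $h(P_1)$, so the vertex is precisely $(|\mathsf{n}_X|+r_X-2)+2$-valent.
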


\begin{proof}
Each element $\Gamma$ of $\tpd$ has $|\mathsf{n}_X|+r_X$ exterior markings (=rays) parallel to the rays $[D_1],\dots,[D_{|\mathsf{n}_X|+r_X}]$ and one vertex (=unique interior marking) with valency $|n|+r_X$. Thus the only possibility is that $\Gamma$ is the translate of the rays of the fan of $X$. Write $\zeta$ for one of the two generators of $\Lambda^{|\mathsf{n}_X|}M$. Then $\Mult(\Gamma)$ is given by the index of
\[
\prod_{j=1}^{|\mathsf{n}_X|+r_X} z^{e^X_j(\mathsf{d})\Delta(j)}=\zeta \in \Lambda^{|\mathsf{n}_X|}M
\]
in $\Lambda^{|\mathsf{n}_X|}M$, which equals 1.
\end{proof}

It follows from \cref{prop:tpd} and the correspondence result of \cite{ManRu}
that \[ R\mathfrak{p}^X_{\mathsf{d}} = 1. \]

We calculate the multiplicity of the element of
$T(\mathfrak{q})_\mathsf{d}^X$ in three steps of increasing generality.

\begin{prop}\label{prop:dim2}
Assume that $X$ is the fake weighted projective plane $\bbP^G(\mathsf{w}_1,\mathsf{w}_2,\mathsf{w}_3)$, where we assumed that $\gcd(\mathsf{w}_1,\mathsf{w}_2,\mathsf{w}_3)=1$. Then $\tqd$ has an unique element of multiplicity $|G| \, \mathsf{w}_1\mathsf{w}_2\mathsf{w}_3d^2$.
\end{prop}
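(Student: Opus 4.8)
The plan is to compute the single element of $\tqd$ explicitly and then evaluate its multiplicity using the $\ell_k$-formalism from \cref{sec:troprev}. First I would argue, exactly as in the proof of \cref{prop:tpd}, that the combinatorial type of any $\Gamma\in\tqd$ is forced. Here $N_\RR\cong\RR^2$, there are $3$ exterior markings parallel to the rays $[D_1],[D_2],[D_3]$ of the fan of $\bbP^G(\mathsf{w}_1,\mathsf{w}_2,\mathsf{w}_3)$ (which satisfy $\mathsf{w}_1\Delta(1)+\mathsf{w}_2\Delta(2)+\mathsf{w}_3\Delta(3)=0$), and two interior markings $P_1,P_2$ mapping to two fixed general points, with $P_2$ carrying a $\psi^{r_X-1}=\psi^1$-condition, i.e.\ $h(P_2)$ is a trivalent vertex. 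A dimension/valence count (the virtual dimension equals $2n_X+r_X+m-3 = 4+2-3 = 3$, matched against the $3$ exterior edge directions) shows $\Gamma$ must be a tree with exactly two vertices $V_1=h(P_1)$, carrying $P_1$ and two of the three $D_j$-edges plus the internal edge, and $V_2=h(P_2)$, carrying $P_2$, the remaining $D_j$-edge, and the internal edge; so $V_1$ is $4$-valent and $V_2$ is $3$-valent, consistent with the $\psi$-conditions. The position of the two points being general pins down $h$ uniquely (up to the isomorphism identification), so $\#\tqd = 1$.

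Next I would compute $\Mult(\Gamma)$ via the flow construction, taking $V_\infty := V_2$ as the sink, so the flow runs from $V_1$ into $V_2$. The edges flowing into $V_1$ are two exterior markings, say $D_a,D_b$ (with $\{a,b,c\}=\{1,2,3\}$), and the interior marking $P_1$; we have $\zeta_{D_a}=z^{\mathsf{w}_a d\,\Delta(a)}$, $\zeta_{D_b}=z^{\mathsf{w}_b d\,\Delta(b)}$ (since $e^X_j(\mathsf{d})=\mathsf{d}\cdot D_j=\mathsf{w}_j d$ here), and $\zeta_{P_1}$ one of the two generators $\omega$ of $\Lambda^2 M$. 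Then
\[
\zeta_{E_{\rm out}} = \ell_3\bigl(z^{\mathsf{w}_a d\,\Delta(a)}\otimes z^{\mathsf{w}_b d\,\Delta(b)}\otimes \omega\bigr)
= z^{(\mathsf{w}_a d\,\Delta(a)+\mathsf{w}_b d\,\Delta(b))}\,\iota_{\mathsf{w}_a d\,\Delta(a)+\mathsf{w}_b d\,\Delta(b)}\,\omega,
\]
using that wedging the scalars $1\in\Lambda^0M$ against $\omega$ just gives $\omega$. By the balancing relation $\mathsf{w}_a\Delta(a)+\mathsf{w}_b\Delta(b) = -\mathsf{w}_c\Delta(c)$, the exponent is $-\mathsf{w}_c d\,\Delta(c)$ and the contracting vector is $-\mathsf{w}_c d\,\Delta(c)$ as well; so $\zeta_{E_{\rm out}} = z^{-\mathsf{w}_c d\,\Delta(c)}\,\iota_{-\mathsf{w}_c d\,\Delta(c)}\,\omega = -\mathsf{w}_c d\; z^{-\mathsf{w}_c d\,\Delta(c)}\,\iota_{\Delta(c)}\omega$. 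At $V_\infty=V_2$ the incoming edges are $E_{\rm out}$, the exterior marking $D_c$ with $\zeta_{D_c}=z^{\mathsf{w}_c d\,\Delta(c)}$, and the interior marking $P_2$ with $\zeta_{P_2}=\omega'$; the product over edges at $V_\infty$ is $\zeta_\Gamma = \zeta_{E_{\rm out}}\cdot\zeta_{D_c}\cdot\zeta_{P_2}$, which lies in $z^0\otimes\Lambda^2M$ as required (the $z$-exponents cancel), and equals $-\mathsf{w}_c d\,(\iota_{\Delta(c)}\omega)\wedge\omega'$ up to sign. Since $\Delta(c)$ is primitive, $(\iota_{\Delta(c)}\omega)\wedge\omega'$ generates $\Lambda^2M$ when $\omega,\omega'$ are generators; the subtlety is that $\omega$ is a generator of $\Lambda^2M$ for the \emph{ambient} lattice $M=\Hom(N,\ZZ)$, whereas $\Delta(c)$ is primitive in $N$ — so $\iota_{\Delta(c)}\omega$ is primitive in $M/\langle$its span$\rangle$ only after accounting for the index $|N/L|=|G|$ of the sublattice $L$ generated by the $\Delta(j)$. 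Tracking this index gives a factor $|G|$, so $\zeta_\Gamma = \pm\,|G|\,\mathsf{w}_c d\cdot(\text{generator of }\Lambda^2M)\cdot(\text{correction})$; combining with the symmetric roles this should assemble to index $|G|\,\mathsf{w}_1\mathsf{w}_2\mathsf{w}_3\,d^2$.

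The main obstacle, and the step deserving the most care, is precisely this bookkeeping of lattice indices in the last paragraph: one must correctly identify $\Delta(j)$ with vectors in $N$ versus the sublattice $L$, pull the change-of-basis matrix $B$ (with $|\det B|=|G|$) through the contraction $\iota$, and verify that the two symmetric contributions (from the two exterior edges at $V_1$) together with the lone edge at $V_2$ conspire to produce $\mathsf{w}_1\mathsf{w}_2\mathsf{w}_3 d^2$ rather than, say, $\mathsf{w}_c^2 d^2$ or $\mathsf{w}_a\mathsf{w}_b d^2$. A clean way to organise this is to work in the basis of $L$ provided by $B$, where the computation reduces to the non-stacky weighted projective plane and the $|G|$ appears uniformly as $|\det B|$; this also explains why the answer is independent of the choice of $V_\infty$ and of the generators $\omega,\omega'$ (it is, as it must be, well-defined up to sign). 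I expect the remaining verification that the index is exactly $\mathsf{w}_1\mathsf{w}_2\mathsf{w}_3 d^2$ to follow from the standard computation of $[\mathrm{pt}]$ in $H^\bullet(\bbP(\mathsf{w}),\bbC)$ recalled in \eqref{eq:ptHnx}, applied here with $n_X=2$.
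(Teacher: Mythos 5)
Your argument breaks down at the identification of the combinatorial type, and the error propagates into the multiplicity computation. For $X=\bbP^G(\mathsf{w}_1,\mathsf{w}_2,\mathsf{w}_3)$ we have $n_X=2$ but $r_X=\rank\Pic(X)=1$, so the descendent condition on $P_2$ is $\psi^{r_X-1}=\psi^0$, not $\psi^1$: both interior markings must map to $2$-valent vertices, i.e.\ both points lie in the interiors of edges. Your proposed type --- $P_1$ at the trivalent central vertex, $P_2$ on the remaining ray --- cannot pass through two points in general position (the second point would have to lie on the ray emanating from the first in the direction $\Delta(c)$; a parameter count gives $3$ moduli against $4$ conditions). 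The correct unique curve is the translated tripod with $P_1$ and $P_2$ sitting on two distinct rays, subdividing them into compact edges $E_1,E_2$ meeting the third ray at an unmarked trivalent vertex, which is the natural sink. Your misassignment also shows up algebraically: at your sink $V_2$ the product $\zeta_{E_{\rm out}}\cdot\zeta_{D_c}\cdot\zeta_{P_2}$ lands in $\Lambda^1M\wedge\Lambda^0M\wedge\Lambda^2M=\Lambda^3M=0$ for the rank-$2$ lattice $M$, rather than in $\Lambda^2M$ as the algorithm of \cite{ManRu19} requires, so no well-defined index can be extracted.

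With the correct type the computation is short and you should carry it out rather than defer it: $\zeta_{E_i}=z^{\mathsf{w}_id\Delta(i)}\iota_{\mathsf{w}_id\Delta(i)}\omega$ for $i=1,2$, and
\[
\zeta_\Gamma=\zeta_{D_3}\,\zeta_{E_1}\,\zeta_{E_2}=\mathsf{w}_1\mathsf{w}_2d^2\,\bigl|\Delta(1)\wedge\Delta(2)\bigr|\;e_1^*\wedge e_2^*,
\]
and the lattice bookkeeping you flag as ``the main obstacle'' is handled in one line by the relation $\mathsf{w}_1\Delta(1)+\mathsf{w}_2\Delta(2)+\mathsf{w}_3\Delta(3)=0$, which gives $|\Delta(1)\wedge\Delta(2)|=\mathsf{w}_3|\det B|=\mathsf{w}_3|G|$ where $B$ is the change of basis to the sublattice generated by the $\Delta(j)$. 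This yields the index $|G|\,\mathsf{w}_1\mathsf{w}_2\mathsf{w}_3d^2$ directly; no appeal to the cohomology computation \eqref{eq:ptHnx} or to a ``symmetric assembly'' of contributions is needed.
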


\begin{proof}
From $\mathsf{w}_1\Delta(1)+\mathsf{w}_2\Delta(2)+\mathsf{w}_3\Delta(3)=0$, it follows that $|\Delta(1)\wedge\Delta(2)|=\mathsf{w}_3|\det B|$. Choose the basis $\{\Delta(1),\Delta(2)\}$ of $N_\RR$. In this basis, choose $P_1$ to be $(1,0)$ and $P_2$ to be $(0,1)$. Then the unique genus 0 degree $d$ maximally tangent tropical curve passing through $P_1$ and $P_2$ consists of the rays $[D_1],[D_2],[D_3]$, meeting at $0=(0,0)$, and with weights $\mathsf{w}_jd$ on $[D_j]$.

Choose 0 to be the sink vertex and let $E_1$, resp. $E_2$, be the edge connecting 0 with $P_1$, resp. $P_2$. Choose moreover $\{e_1,e_2\}$ to be a $\ZZ$-basis of $M$ with dual basis $\{e^*_1,e_2^*\}$. Then
\begin{gather*}
\zeta_{E_1} = \ell_2(\zeta_{D_1}\otimes\zeta_{P_1})=\ell_2(z^{\mathsf{w}_1d\Delta(1)}\otimes (e_1^*\wedge e_2^*))=z^{\mathsf{w}_1d\Delta(1)} \iota_{\mathsf{w}_1d\Delta(1)}(e_1^*\wedge e_2^*) \\ 
 = z^{\mathsf{w}_1d\Delta(1)} \left( (\iota_{\mathsf{w}_1d\Delta(1)}e_1^*)\wedge e_2^* - e_1^*\wedge \iota_{\mathsf{w}_1d\Delta(1)} (e_2^*) \right) = z^{\mathsf{w}_1d\Delta(1)} \left(  e_1^*(\mathsf{w}_1d\Delta(1)) \, e_2^* - e_2^*(\mathsf{w}_1d\Delta(1)) \, e_1^* \right).
\end{gather*}
Similarly
\[
\zeta_{E_2} = z^{\mathsf{w}_2d\Delta(2)} \left(  e_1^*(\mathsf{w}_2d\Delta(2)) \, e_2^* - e_2^*(\mathsf{w}_2d\Delta(2)) \, e_1^* \right)
\]
and 
\begin{align*}
\zeta_\Gamma &= \zeta_{D_3} \zeta_{E_1} \zeta_{E_2} = - e_1^*(\mathsf{w}_1d\Delta(1)) \,  e_2^*(\mathsf{w}_2d\Delta(2)) \, e_2^*\wedge e_1^* - e_2^*(\mathsf{w}_1d\Delta(1)) \, e_1^*(\mathsf{w}_2d\Delta(2)) \, e_1^*\wedge e_2^* \\
 &= \mathsf{w}_1 \mathsf{w}_2 d^2 \left( e_1^*(\Delta(1)) e_2^*(\Delta(2))  - e_2^*(\Delta(1)) e_1^*(\Delta(2))  \right) \, e_1^*\wedge e_2^* \\
 &= \mathsf{w}_1 \mathsf{w}_2 d^2 \, |\Delta(1)\wedge \Delta(2)| \, e_1^*\wedge e_2^* = \mathsf{w}_1 \mathsf{w}_2 \mathsf{w}_3 d^2 \, |\det B| \, e_1^*\wedge e_2^*,
\end{align*}
which is indeed of index $|G|\,\mathsf{w}_1\mathsf{w}_2\mathsf{w}_3d^2$ in $\Lambda^2M$.
\end{proof}

\begin{prop}
\label{prop:tqdr1}
Assume that $r_X=1$, i.e.\ $X=\bbP^G(\mathsf{w}_1,\dots,\mathsf{w}_{n+1})$ and that $n\geq3$.
Then, for an appropriate choice of marked points $P_1$ and $P_2$, the set $\tqd$ has a unique element $\Gamma$ of multiplicity $|G| \, \prod_{j=1}^{n+1}\mathsf{w}_j \, d^n$.
\end{prop}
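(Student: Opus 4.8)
The plan is to extend the computation of \cref{prop:dim2} from the fake weighted projective plane to an arbitrary $n$-dimensional fake weighted projective space $X=\bbP^G(\mathsf{w}_1,\dots,\mathsf{w}_{n+1})$, $n\geq 3$, in two steps: (i) identify the unique tropical curve $\Gamma\in\tqd$ together with its combinatorial type, and (ii) compute $\Mult(\Gamma)$ by propagating the multiplicity procedure of \cref{sec:troprev} along that type. Throughout, $\Delta(1),\dots,\Delta(n+1)$ denote the primitive ray generators of the fan of $X$; they satisfy the relation $\sum_{j=1}^{n+1}\mathsf{w}_j\Delta(j)=0$, unique up to scale, and — the fan being simplicial and complete — any $n$ of them form a $\QQ$-basis of $N_\QQ$. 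I write $d$ for the degree, so $w(D_j)=\mathsf{d}\cdot D_j=\mathsf{w}_jd$.

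\emph{Step 1 (the tropical curve).} A genus $0$ degree $d$ maximally tangent tropical curve in $X$ has the $n+1$ weighted exterior edges $[D_1],\dots,[D_{n+1}]$ together with the two interior marking edges $P_1,P_2$, hence $n+3$ ends. A dimension count as in \cref{sec:troprev} shows that $\tqd$ is $0$-dimensional, so finite, and that any combinatorial type which is not everywhere trivalent has strictly fewer than $2n$ moduli, hence cannot meet two general points; so every $\Gamma\in\tqd$ is trivalent, with exactly $n$ bounded edges and $n+1$ vertices, each vertex carrying one of the $n+1$ weighted rays or one of the two markings. If the graph is not a path carrying $P_1,P_2$ at its two ends and one ray at each vertex — i.e.\ not a \emph{caterpillar} — then the path in $\Gamma$ between the vertex carrying $P_1$ and the one carrying $P_2$ has fewer than $n$ edges, and $p_2-p_1$, being a combination of the balancing-determined directions of those edges, is forced into a proper subspace of $N_\RR$, contradicting generality. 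So $\Gamma$ is a caterpillar $W_0-W_1-\dots-W_n$, with $P_1$ at $W_0$, $P_2$ at $W_n$ (so that the required $\psi^{r_X-1}=\psi^0$ condition is met), and one ray $[D_{a_k}]$ at $W_k$ for an ordering $(a_0,\dots,a_n)$ of $(1,\dots,n+1)$. Balancing forces the backbone edge $F_k$ from $W_{k-1}$ to $W_k$ to have primitive direction that of $-\sum_{i<k}\mathsf{w}_{a_i}\Delta(a_i)=\sum_{i\geq k}\mathsf{w}_{a_i}\Delta(a_i)$; writing $p_2-p_1=\sum_{j=1}^{n+1}c_j\mathsf{w}_j\Delta(j)$ — possible since the $\mathsf{w}_j\Delta(j)$ span $N_\RR$ with the single relation above, so the $c_j$ are defined up to a common constant — one checks that the backbone closes up with all edge lengths positive precisely when $c_{a_0}<c_{a_1}<\dots<c_{a_n}$. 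Since the $c_j$ are pairwise distinct for general $p_1,p_2$, this determines $(a_0,\dots,a_n)$, hence $\Gamma$, uniquely. This combinatorial rigidity — chiefly excluding the non-caterpillar trivalent types — is the step I expect to need the most care.

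\emph{Step 2 (the multiplicity).} With $\Gamma$ as above, take the sink vertex to be $W_n$, so all edges flow towards it. Writing $\omega$ for a chosen generator of $\Lambda^nM$, one has $\zeta_{P_1}=\zeta_{P_2}=\omega$ and $\zeta_{[D_j]}=z^{\mathsf{w}_jd\Delta(j)}$; propagating $\ell_2$ along the backbone and using at each vertex that $\iota_v\iota_v=0$ kills all but one term, an induction on $k$ gives
\[
\zeta_{F_k}=z^{\,d\sum_{i=0}^{k-1}\mathsf{w}_{a_i}\Delta(a_i)}\left(\prod_{i=0}^{k-1}\mathsf{w}_{a_i}\right)d^{\,k}\;\iota_{\Delta(a_{k-1})}\cdots\iota_{\Delta(a_0)}\,\omega .
\]
For $k=n$ the surviving factor $\iota_{\Delta(a_{n-1})}\cdots\iota_{\Delta(a_0)}\,\omega\in\Lambda^0M$ equals, up to sign, $\det[\Delta(a_0)\mid\cdots\mid\Delta(a_{n-1})]$, whose absolute value is the index $[N:\langle\Delta(a_0),\dots,\Delta(a_{n-1})\rangle]$. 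Since $[N:L]=|G|$ for $L=\langle\Delta(1),\dots,\Delta(n+1)\rangle$, and $[L:\langle\Delta(a_0),\dots,\Delta(a_{n-1})\rangle]=\mathsf{w}_{a_n}$ (using $\gcd(\mathsf{w}_1,\dots,\mathsf{w}_{n+1})=1$, exactly as $|\Delta(1)\wedge\Delta(2)|=\mathsf{w}_3|\det B|$ in \cref{prop:dim2}), this absolute value is $|G|\,\mathsf{w}_{a_n}$. At the sink $\zeta_\Gamma=\zeta_{F_n}\,\zeta_{[D_{a_n}]}\,\zeta_{P_2}$, and the $z$-exponent $d\sum_{i<n}\mathsf{w}_{a_i}\Delta(a_i)+\mathsf{w}_{a_n}d\Delta(a_n)=d\sum_j\mathsf{w}_j\Delta(j)=0$ vanishes, so $\zeta_\Gamma\in z^0\otimes\Lambda^nM$ as predicted and
\[
\Mult(\Gamma)=\left(\prod_{i=0}^{n-1}\mathsf{w}_{a_i}\right)d^{\,n}\cdot|G|\,\mathsf{w}_{a_n}=|G|\left(\prod_{j=1}^{n+1}\mathsf{w}_j\right)d^{\,n},
\]
which is independent of the ordering $(a_0,\dots,a_n)$ — so only the uniqueness from Step 1, not the precise ordering, is needed. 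The base case $n=2$ is \cref{prop:dim2} (the caterpillar is then $W_0-W_1-W_2$ with the three rays at the three backbone vertices), and combined with \cite[Theorem~1.1]{ManRu} this yields the case $r_X=1$ of the formula for $R\mathfrak{q}^X_{\mathsf{d}}$ in \cref{thm:main_log}.
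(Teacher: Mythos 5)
Your proposal is correct, and your Step 2 (the multiplicity computation) is essentially the paper's: the same caterpillar combinatorial type, the same sink at the $P_2$-end, the same inductive propagation of contractions along the backbone killing all but one term at each vertex, and the same final identification of $|\det[\Delta(a_0)|\cdots|\Delta(a_{n-1})]|$ with $|G|\,\mathsf{w}_{a_n}$ via the weight relation and $\gcd(\mathsf{w}_j)=1$. Where you genuinely diverge is Step 1. The paper establishes uniqueness of the element of $\tqd$ by projecting $N_\RR$ onto the planes $\langle\Delta(1),\Delta(j)\rangle_\RR$, observing that each projection is a maximally tangent tropical curve through two general points in a fake weighted projective plane, invoking \cref{prop:dim2} to pin down each projection, and then lifting; the caterpillar structure is read off from a specific choice of $P_1$ and $P_2$. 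You instead argue intrinsically: genericity forces trivalence, the path between the two marked vertices must carry all $n$ bounded edges (else $p_2-p_1$ lands in a proper subspace), so the type is a caterpillar with the markings at the two ends, and the expansion $p_2-p_1=\sum_j c_j\mathsf{w}_j\Delta(j)$ with the $c_j$ defined up to a common constant shows that exactly one ordering of the rays yields positive edge lengths. Your route is self-contained (it does not need \cref{prop:dim2} as an input for uniqueness, only as a consistency check) and makes the existence and rigidity of the single combinatorial type more transparent; the paper's route is shorter on the page because it recycles the two-dimensional case, at the cost of a lifting step. The one place where your write-up is thinner than it should be is the dimension count justifying trivalence and finiteness of $\tqd$ — \cref{sec:troprev} does not actually contain such a count, so you would need to supply the standard formula (a genus $0$ type with $\ell$ non-contracted ends and overvalence $\mathrm{ov}$ moves in dimension $n+\ell-3-\mathrm{ov}$, here $2n-2-\mathrm{ov}$ against $2(n-1)$ point conditions) — but this is routine and the paper itself asserts trivalence with no more justification.
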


\begin{proof}
We choose as basis of $N_\RR$ the basis $\{\Delta(1),\dots,\Delta(n)\}$. We choose our second point (interior marking) $P_2$ to have coordinate $(a_1,\dots,a_n)$ for $a_i<0$ and general. We choose our first marked point $P_1$ to have coordinate $(b,0,\dots,0)$ for $b>0$ large enough so that restricted to the halfspace $\{(x_1,\dots,x_n) | x_1 > b\}$, any $h\in\tqd$ is affine linear with image $(b,0,\dots,0)+\RR_{>0} \, \Delta(1)$ and weight $e_1(\mathsf{d})$.

For $1<j\leq n$, write $\mathsf{w}_{1j}\Delta(1j) := - \mathsf{w}_1\Delta(1)-\mathsf{w}_j\Delta(j)$ with $\Delta(1j)$ primitive and $\mathsf{w}_{1j}\in\NN$. Consider the finite abelian group
\[
G^j:=\left(\langle\Delta(1),\Delta(2)\rangle_\RR\cap N\right)/\langle\Delta(1),\Delta(2),\Delta(1j)\rangle.
\]
Given $\Gamma\in\tqd$, projecting to the plane $\langle\Delta(1),\Delta(2)\rangle_\RR$  leads to a genus 0 maximally tangent tropical curve in $\bbP^{G^j}(\mathsf{w}_1,\mathsf{w}_j,\mathsf{w}_{1j})$ passing through 2 general points. By \cref{prop:dim2}, there is only one such curve (and it has multiplicity $|G^j|\mathsf{w}_1\mathsf{w}_j\mathsf{w}_{1j}d^2$). These curves lift to a unique maximally tangent curve $h : \Gamma \to N_\RR$.

Choose $P_2$ to be the sink vertex and consider the associated flow. Since the $a_i$ are chosen to be general, on the set $\{ (x_i) | x_i < a_i \}$, $h$ is affine linear with slope parallel to $\Delta(n+1)$. We reorder the $\Delta(j)$ such that following the flow from $P_2$, the rays that are added to $\Gamma$ are successively translates of $[D_n],[D_{n-1}],\dots,[D_2]$. Note that all vertices are 3-valent since $P_1$ and $P_2$ are in general position. Starting at $P_1$ and following the flow, we label the compact edges successively $E_1,\dots,E_n$. Choose a $\ZZ$-basis $e_1,\dots,e_n$ of $N$.
Then
\[
\zeta_{E_1}=\ell_2(z^{\mathsf{w}_1d\Delta(1)}\otimes e_1^*\wedge\cdots \wedge e_n^*)=z^{\mathsf{w}_1d\Delta(1)}\iota_{\mathsf{w}_1d\Delta(1)} (e_1^*\wedge\cdots\wedge e_n^*).
\]
At the next step,
\begin{align*}
\zeta_{E_2}&=z^{\mathsf{w}_1d\Delta(1)+\mathsf{w}_2d\Delta(2)} \iota_{\mathsf{w}_1d\Delta(1)+\mathsf{w}_2d\Delta(2)} \circ \iota_{\mathsf{w}_1d\Delta(1)} (e_1^*\wedge\cdots\wedge e_n^*) \\
&=z^{\mathsf{w}_1d\Delta(1)+\mathsf{w}_2d\Delta(2)} \iota_{\left(\mathsf{w}_1d\Delta(1)+\mathsf{w}_2d\Delta(2)\right) \wedge \mathsf{w}_1d\Delta(1)} (e_1^*\wedge\cdots\wedge e_n^*) \\
&=z^{\mathsf{w}_1d\Delta(1)+\mathsf{w}_2d\Delta(2)} \iota_{\left(\mathsf{w}_1\mathsf{w}_2d^2\Delta(1)\wedge\Delta(2)\right)} (e_1^*\wedge\cdots\wedge e_n^*).
\end{align*}
Iterating this process, we obtain that
\[
\zeta_{E_n} = z^{\mathsf{w}_1d\Delta(1)+\cdots+\mathsf{w}_nd\Delta(n)} \iota_{\left(\mathsf{w}_1\cdots\mathsf{w}_nd^n\Delta(1)\wedge\cdots\wedge\Delta(n)\right)} (e_1^*\wedge\cdots\wedge e_n^*).
\]
Since $\mathsf{w}_1\Delta(1)+\cdots+\mathsf{w}_{n+1}\Delta(n+1)=0$, $|\Delta(1)\wedge\cdots\wedge\Delta(n)|=\mathsf{w}_{n+1}|\det B|$ and hence
\begin{align*}
\zeta_\Gamma &=  \iota_{\left(\mathsf{w}_1\cdots\mathsf{w}_nd^n\Delta(1)\wedge\cdots\wedge\Delta(n)\right)} (e_1^*\wedge\cdots\wedge e_n^*) \, e_1^*\wedge\cdots\wedge e_n^* \\
&= \mathsf{w}_1\cdots\mathsf{w}_n d^n \, |\Delta(1)\wedge\cdots\wedge\Delta(n)| \, e_1^*\wedge\cdots\wedge e_n^*\\
&=\mathsf{w}_1\cdots\mathsf{w}_n \mathsf{w}_{n+1} d^n \, |\det B| \, e_1^*\wedge\cdots\wedge e_n^*.
\end{align*}
which is indeed of index $|G|\, \mathsf{w}_1\cdots\mathsf{w}_n \mathsf{w}_{n+1} d^n$ in $\Lambda^{n}M$.
\end{proof}

\begin{prop}\label{prop:tqd}
Let  $X=\prod_{i=1}^{r_X} \bbP^{G_i}(\mathsf{w}^{(i)})$ be the product of fake weighted projective spaces. Then the set $\tqd$ has a unique element $\Gamma$ of multiplicity $\prod_{i=1}^{r_X} \big{|}G_i\big{|} \, \l(\prod_{i,j} \big(\mathsf{w}_X\big)^{(i)}_j\r) \mathsf{d}^{\mathsf{n}_X}$.
\end{prop}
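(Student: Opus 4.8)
The plan is to promote the two special cases \cref{prop:dim2} and \cref{prop:tqdr1} to the product, following the scheme of the proof of \cref{prop:tqdr1} itself. Write $X=\prod_{i=1}^{r_X}\bbP^{G_i}(\mathsf{w}^{(i)})$, so that $\Sigma\subset N_\RR$ is the product of the fans $\Sigma_i\subset(N_i)_\RR$ of the factors, $N=\bigoplus_iN_i$, $M=\bigoplus_iM_i$, and $\Lambda^{|\mathsf{n}_X|}M\cong\bigotimes_i\Lambda^{n_i}M_i$. Fix a $\ZZ$-basis of each $N_i$, let $\omega_i\in\Lambda^{n_i}M_i$ be the dual generator and $\omega:=\omega_1\wedge\dots\wedge\omega_{r_X}$ the resulting generator of $\Lambda^{|\mathsf{n}_X|}M$, and label the rays of $\Sigma_i$ so that $[D^{(i)}_{n_i+1}]$ is the last one. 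We will use, for each $i$, the lattice index identity $\bigl|\Delta^{(i)}(1)\wedge\dots\wedge\Delta^{(i)}(n_i)\bigr|=\mathsf{w}^{(i)}_{n_i+1}\,|G_i|$ established in the proof of \cref{prop:tqdr1}; it also holds when $n_i=1$, where $\bbP^{G_i}(\mathsf{w}^{(i)})\cong\bbP^1$ and both sides equal $1$.

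\emph{Uniqueness of $\Gamma$.} As in \cref{prop:tpd}, the exterior markings of any $\Gamma\in\tqd$ are forced to be translates of the rays of $\Sigma$, carrying weights $\mathsf{d}\cdot D^{(i)}_j=d_i\mathsf{w}^{(i)}_j$. Taking $P_1,P_2$ in general position in $N_\RR$, the same argument as in the proof of \cref{prop:tqdr1} shows that every vertex other than $h(P_2)$ is trivalent and carries exactly one ray, while the $\psi^{r_X-1}$-condition at $P_2$ forces $h(P_2)$, unlike every other vertex, to carry the remaining $r_X$ rays. Since $b_1(\Gamma)=0$, $\Gamma$ is therefore a caterpillar: a single path from $h(P_1)$ to $h(P_2)$ with $|\mathsf{n}_X|$ edges, one ray attached at $h(P_1)$, one at each of the $|\mathsf{n}_X|-1$ internal vertices, and the remaining $r_X$ rays — necessarily $[D^{(i)}_{n_i+1}]$, $i=1,\dots,r_X$ — attached at $h(P_2)$. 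Projecting $h$ to each factor $(N_i)_\RR$ and contracting the now-constant edges produces a genus $0$, degree $d_i$, maximally tangent tropical curve through two general points with a $\psi^0$-condition at the second, which by \cref{prop:dim2}, \cref{prop:tqdr1}, or the trivial case $n_i=1$ is the unique such curve for $\bbP^{G_i}(\mathsf{w}^{(i)})$; as the slopes of the path and the prescribed images of $P_1,P_2$ are thereby determined, $\Gamma$ is unique.

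\emph{Computation of the multiplicity.} Choose $V_\infty=h(P_2)$ and run the flow from $h(P_1)$, with $\zeta_{P_1}=\omega$. Label the path edges $E_1,\dots,E_{|\mathsf{n}_X|}$ starting at $h(P_1)$, and let $\Delta_s$ be the primitive generator of the ray inserted at step $s$, with weight $c_s=d_i\mathsf{w}^{(i)}_j$ if that ray is $[D^{(i)}_j]$; by construction $\{\Delta_s\}=\{\Delta^{(i)}(j):1\le j\le n_i,\ 1\le i\le r_X\}$. A straightforward induction, identical to the one in the proof of \cref{prop:tqdr1} (using that the direction accumulated so far is always a linear combination of the ray directions already contracted, so that the corresponding part of each new contraction vanishes), gives
\[
\zeta_{E_{|\mathsf{n}_X|}}=z^{\,c_1\Delta_1+\dots+c_{|\mathsf{n}_X|}\Delta_{|\mathsf{n}_X|}}\,\iota_{\,(c_1\Delta_1)\wedge\dots\wedge(c_{|\mathsf{n}_X|}\Delta_{|\mathsf{n}_X|})}(\omega),
\]
irrespective of how the rays from different factors are interleaved along the path; the final interior contraction is by a top-degree element of $\Lambda^\bullet N$, hence an integer. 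Since the $\Delta^{(i)}(j)$ lie in the summand $N_i\subset N$ and $\omega=\omega_1\wedge\dots\wedge\omega_{r_X}$, that integer factors over $i$, and the index identity above gives $\iota_{\Delta_1\wedge\dots\wedge\Delta_{|\mathsf{n}_X|}}(\omega)=\pm\prod_i\mathsf{w}^{(i)}_{n_i+1}|G_i|$, while $\prod_sc_s=\mathsf{d}^{\mathsf{n}_X}\prod_i\prod_{j=1}^{n_i}\mathsf{w}^{(i)}_j$. Finally $\zeta_\Gamma=\zeta_{E_{|\mathsf{n}_X|}}\cdot\prod_i\zeta_{[D^{(i)}_{n_i+1}]}\cdot\zeta_{P_2}$; the $z$-degrees add up to $\sum_id_i\sum_j\mathsf{w}^{(i)}_j\Delta^{(i)}(j)=0$ by the fan relations, so $\zeta_\Gamma$ equals
\[
\pm\,\mathsf{d}^{\mathsf{n}_X}\Bigl(\prod_i\prod_{j=1}^{n_i}\mathsf{w}^{(i)}_j\Bigr)\Bigl(\prod_i\mathsf{w}^{(i)}_{n_i+1}|G_i|\Bigr)=\pm\prod_{i=1}^{r_X}|G_i|\,\Bigl(\prod_{i,j}(\mathsf{w}_X)^{(i)}_j\Bigr)\mathsf{d}^{\mathsf{n}_X}
\]
times a generator of $\Lambda^{|\mathsf{n}_X|}M$, which is the asserted multiplicity.

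The main obstacle is the uniqueness step: one has to combine the factorwise pictures of \cref{prop:dim2,prop:tqdr1} into a single general-position argument pinning down the combinatorial type of $\Gamma$ — the caterpillar shape, and that $h(P_2)$ is the only vertex of valency above the generic one, this being forced solely by the $\psi$-condition. Note that the interleaving order of the rays from distinct factors along the path depends on the chosen positions of $P_1$ and $P_2$, but it is irrelevant to the final answer, since $\Mult(\Gamma)$ is a lattice index and hence insensitive to the sign changes produced by reordering the $\Delta_s$.
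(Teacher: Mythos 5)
Your proof is correct and follows essentially the same route as the paper's: uniqueness is obtained by projecting to each factor of $X$ and invoking the one-factor cases, and the multiplicity is computed by running the flow with sink $h(P_2)$ exactly as in \cref{prop:tqdr1}. The only difference is one of detail — you write out the caterpillar combinatorics, the product factorisation of the lattice index, and the $n_i\leq 2$ factor cases explicitly, where the paper simply says the multiplicity ``is calculated as in \cref{prop:tqdr1}.''
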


\begin{proof}
Label the last $r_X$ divisors $D_{|\mathsf{n}_X|+1},\dots,D_{|\mathsf{n}_X|+r_X}$ to be coming from distinct components of $X$. Then $[D_{1}],\dots,[D_{|\mathsf{n}_X|}]\in\Sigma^{[1]}$ form a $\RR$-basis of $N_\RR$. To calculate $R\mathfrak{q}^X_{\mathsf{d}}$, we choose the marking $P_2$ with the $\psi^{r_X-1}$ condition to be the origin 0. We choose the marking $P_1$ a general point that has positive coordinates with respect to the above basis. Then the $r_X+1$ incoming rays at $P_2$ are necessarily $D_{|\mathsf{n}_X|+1},\dots,D_{|\mathsf{n}_X|+r_X}$ with weights $e_j^X(\mathsf{d})$ and a primitive vector in direction $-e_{|\mathsf{n}_X|+1}(\mathsf{d})D_{|\mathsf{n}_X|+1}-\dots-e_{|\mathsf{n}_X|+r_X}(\mathsf{d})D_{|\mathsf{n}_X|+r_X}$ with appropriate weight.

There is only one way to make a maximally tangent tropical curve $\Gamma$ passing through $P_1$ out of it. To see this, for each $i$, consider the map of fans $\Sigma\to\Sigma_i$ corresponding to the projection to the $i^{\rm th}$ component. In $\Sigma_i$ the tropical curve becomes straight at 0 and hence we are looking at maximally tangent curves of degree $d_i$ passing through two general points. By \cref{prop:tqdr1}, there is only one such. Moreover, the curve in $N_\bbR$ is uniquely determined by these projections.

Choose $P_2$ to be the sink vertex. Then the multiplicity of $\Gamma$ is calculated as in \cref{prop:tqdr1} to be $\prod_{i=1}^{r_X} \big{|}G_i\big{|} \, \l(\prod_{i,j} \big(\mathsf{w}_X\big)^{(i)}_j\r) \mathsf{d}^{\mathsf{n}_X}$.
\end{proof}

The correspondence principle of \cite{ManRu} then entails that \[ R\mathfrak{q}^X_{\mathsf{d}}= \prod_{i=1}^{r_X} \big{|}G_i\big{|} \, \l(\prod_{i,j} \big(\mathsf{w}_X\big)^{(i)}_j\r) \mathsf{d}^{\mathsf{n}_X}, \]
concluding the calculations of the logarithmic invariants of \cref{thm:main_log}.

\section{The local side: proof of \cref{thm:main_loc}}

\label{sec:proof_loc}

\subsection{The Poincar\'e pairing}

As in \cref{sec:cg}, we consider the scalar $T\simeq\bbC^\star$ action on $X^{\rm loc}_D$ that covers the
trivial action on the base $X$, and denote $\lambda=c_1(\cO_{B\bbC^\star}(1))$
for the corresponding equivariant parameter. Notice that for any
$\mathsf{l}\preceq \mathsf{n}_X$, the Gram matrix $\eta_{\mathsf{lm}}$ for the
restriction to the untwisted sector of the $T$-equivariant Chen-Ruan pairing \eqref{eq:grampd}
   of $X^{\rm loc}_D$ satisfies
\beq
\eta_{\mathsf{l} \mathsf{n}_X} = \int_{[X]}\frac{H^{\mathsf{l+n}_X}}{\mathrm{e}_T(N_{X/X^{\rm loc}_D})} =
\int_{[X]}\frac{H^{\mathsf{l+n}_X}}{(\mathrm{e}_T(N_{X/X^{\rm loc}_D}))^{[0]}}=
\int_{[X]}\frac{H^{\mathsf{l+n}_X}}{\prod_{i=1}^{|\mathsf{n}_X|+r_X} \lambda} = \l\{ \bary{cc}  
\frac{\prod_i |G_i| \, \prod_{i,j} (\mathsf{w}_X)_j^{(i)} }{ \lambda^{|\mathsf{n}_X|+r_X}} & \mathsf{l}=0, \\
0 & \mathrm{else,}\\
\eary \r.
\label{eq:etaXloc}
\eeq
for degree reasons. Also, $\eta_{\mathsf{l}\mathsf{m}}=0$ if
$|\mathsf{l}|+|\mathsf{m}|>|\mathsf{n}_X|$ for the same reason: this means
that 
$\eta_{\mathsf{lm}}$ is upper anti-triangular, and
$\eta^{\mathsf{lm}}:=(\eta^{-1})_{\mathsf{lm}}$ is lower anti-triangular with
anti-diagonal elements $\eta^{\mathsf{l},
  \mathsf{n}_X-\mathsf{l}}=1/\eta_{\mathsf{l}, \mathsf{n}_X-\mathsf{l}}$.

\subsection{One pointed descendants}

In the following, let $y=y_1 \dots y_{r_X}$ and $Q=\re^{t_1+\dots +
  t_{r_X}}$. From \eqref{eq:Jsmall}, we have
\beq
J_{\rm small}^{X^{\rm loc}_D}(t,z):=z \prod_{i=1}^{r_X} \re^{t_i
  H_i/z}\Bigg[1+\sum_{\mathsf{d},a,\mathsf{l},\mathsf{m}}  Q^{\mathsf{d}}
  z^{-a-2} \bra H^\mathsf{l} \psi^a \ket_{0,1,\mathsf{d}}^{X^{\rm loc}_D}
  \eta^{\mathsf{lm}} H^\mathsf{m}   \Bigg]=: \sum_{\mathsf{m}} (J_{\rm small}^{X_D^{\rm
    loc}})^{[\mathsf{m}]} H^\mathsf{m}.
\eeq
Using \eqref{eq:etaXloc},
we get that the component of the small, twisted $J$-function along
the identity class is
\bea
 (J_{\rm sm}^{X^{\rm loc}_D})^{[0]} &:=& z
\Bigg[1+\sum_{\mathsf{d},a,\mathsf{l}}  Q^{\mathsf{d}}
  z^{-a-2} \bra H^\mathsf{l} \psi^a \ket_{0,1,\mathsf{d}}^{X^{\rm loc}_D}
  \eta^{\mathsf{l}0} \Bigg], \nn \\
&=& z\Bigg[1+\frac{\lambda^{|\mathsf{n}_X|+r_X}}{\prod_i |G_i| \, \prod_{i,j}
    (\mathsf{w}_X)_j^{(i)}}\sum_{\mathsf{d},a}  Q^{\mathsf{d}}
  z^{-a-2} \bra H^{\mathsf{n}_X}\psi^a \ket_{0,1,\mathsf{d}}^{X^{\rm loc}_D}\Bigg], \nn \\
&=& z\Bigg[1+\lambda^{|\mathsf{n}_X|+r_X} \sum_{\mathsf{d},a}  Q^{\mathsf{d}}
  z^{-a-2} \bra [\mathrm{pt}] \psi^a \ket_{0,1,\mathsf{d}}^{X^{\rm loc}_D} 
  \Bigg].
\label{eq:Jsm1TB}
\eea
Therefore our first set of invariants \eqref{eq:pd} can be
computed from \eqref{eq:Jsm1TB} as
\beq
\mathfrak{p}^X_\mathsf{d} := \bra \mathrm{[pt]} \psi^{|\mathsf{n}_X|+r_X-2} \ket_{0,1,\mathsf{d}}=
\frac{1}{\lambda^{|\mathsf{n}_X|+r_X}} \l[z^{-|\mathsf{n}_X|-r_X} \re^{t\cdot \mathsf{d}}\r]
(J_{\rm small}^{X^{\rm  loc}_D})^{[0]}.
\label{eq:pdJ}
\eeq
To compute the r.h.s.\ we use \cref{thm:I=J}. For quantities $a(j)$ depending on $e_j^X(\mathsf{d})$, the notation $\overset{\circ}{\prod}_ja(j)$ refers to the product of $a(j)$ over $j\in\{1,\dots,|\mathsf{n}_X|+r_X \, \big{|} \, e^X_j(\mathsf{d})\neq0\}$. From \eqref{eq:Ifun} and \eqref{eq:Ifunloc}, the
$I$-functions of $X$ and 
$X_D^{\rm loc}$ are
\bea
\label{eq:IX}
I^{X}(y,z)
&:=& z \sum_\mathsf{d}\prod_{i=1}^{r_X} y_i^{H_i/z+d_i} 
\overset{\circ}{\prod}_j
\frac {1}{ \prod_{m_j=1}^{e^X(\mathsf{d})} \l(m_jz +\sum_{i} Q^X_{ij}H_i\r)}=: \sum_{\mathsf{m}} (I^{X_{\rm
    loc}})^{[\mathsf{m}]} H^\mathsf{m}, \\
I^{X^{\rm loc}_D}(y,z)
&:=& z \sum_\mathsf{d}\prod_{i=1}^{r_X} y_i^{H_i/z+d_i} 
\overset{\circ}{\prod}_j
\frac{\prod_{m_j=0}^{e_j^X(\mathsf{d})-1} \l(\lambda-m_jz -\sum_{i} Q^X_{ij} H_i\r)}{ \prod_{m_j=1}^{e^X(\mathsf{d})} \l(m_jz +\sum_{i} Q^X_{ij}H_i\r)}=: \sum_{\mathsf{m}} (I^{X_{\rm
    loc}})^{[\mathsf{m}]} H^\mathsf{m}. \nn \\
\label{eq:IXloc}
\eea
\begin{lem}
  The mirror maps of $X$ and $X^{\rm loc}_D$ are trivial,
  \beq
  \tilde{t}^X_i(y)=\tilde{t}_i^{X^{\rm loc}_D}(y)=\log y_i.
  \eeq
  
\end{lem}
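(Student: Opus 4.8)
The claim is that the $\mathcal{O}(z^0)$-coefficient of $H_i$ in each of the $I$-functions \eqref{eq:IX} and \eqref{eq:IXloc} equals $\log y_i$, i.e.\ that beyond the explicit prefactor $\prod_i y_i^{H_i/z}$ (whose $z^0 H_i$ part is $\log y_i$) there is no further contribution of $\mathcal{O}(z^0)$ to the $H_i$-component. The strategy is a direct degree count in $z$. First I would note that the prefactor $z\prod_i y_i^{H_i/z}$ contributes exactly $\log y_i$ to $[z^0 H_i]$ (and $z$ to $[z^1 H(X)]$, which is the standard normalisation of a $J$-function). So it suffices to show that for every nonzero effective $\mathsf{d}\in\mathrm{NE}(X)$, the summand
\[
S_{\mathsf{d}}(y,z) := z\prod_i y_i^{H_i/z}\,\prod_i y_i^{d_i}\, z^{\mathsf{d}\cdot K_X}\,\frac{1}{\prod_j\bigl(\tfrac{\kappa_j}{z}+1\bigr)_{\mathsf{d}\cdot D_j}}
\]
(respectively its $X_D^{\mathrm{loc}}$ analogue with the extra numerator $\prod_j\kappa_j\,(\tfrac{\kappa_j}{z}+1)_{\mathsf{d}\cdot D_j-1}$ and shifted $z$-power $z^{\mathsf{d}\cdot(K_X+D)-l_D}$) has no term proportional to $z^0 H_i$ once expanded in $\hhh^\bullet(X,\mathbb{C})$.

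The key computation is a power-of-$z$ bookkeeping, using that $\kappa_j=c_1(\mathcal{O}(-D_j))$ is nilpotent and $\mathsf{d}\cdot K_X = -\sum_j \mathsf{d}\cdot D_j = -e^X(\mathsf{d})$. For the $X$-side: the Pochhammer denominator $\prod_j(\tfrac{\kappa_j}{z}+1)_{\mathsf{d}\cdot D_j}$ expands as $z^{-e^X(\mathsf{d})}\prod_j\prod_{m=1}^{\mathsf{d}\cdot D_j}(mz+\ldots)$, so the net power of $z$ in $S_{\mathsf{d}}$ is $1 + \mathsf{d}\cdot K_X + e^X(\mathsf{d}) - (\text{cohomology degree contributions})$. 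Extracting the $H_i$-component (cohomological degree $2$, hence a single power of $z$ absorbed into the grading relative to $z^1 H(X)$) and asking for $z^0$ forces $e^X(\mathsf{d}) = \mathsf{d}\cdot D_j$-type constraints that cannot be met for $\mathsf{d}\neq 0$, because each $\mathsf{d}\cdot D_j\ge 0$ and at least one is strictly positive for effective nonzero $\mathsf{d}$ on a product of fake weighted projective spaces, so $e^X(\mathsf{d})\ge 2$. More precisely: the lowest power of $z$ appearing in $S_{\mathsf{d}}$ times $H_i$ is strictly positive, so there is no $z^0 H_i$ term. For $X_D^{\mathrm{loc}}$, the numerator $\prod_j\kappa_j\prod_{m=0}^{\mathsf{d}\cdot D_j-1}(\lambda-mz-\ldots)$ supplies an extra $l_D$ powers of $z$ at top order and a factor $\prod_j\kappa_j$ of cohomological degree $2l_D$; combined with the shift $z^{\mathsf{d}\cdot(K_X+D)-l_D}=z^{-l_D}$ (using $K_X+D=0$), the $z$-power balance is identical to the $X$-case up to the insertion of nilpotent classes that only raise the cohomological degree, hence again no $z^0 H_i$ term survives.

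Concretely I would organise the argument as: (i) isolate the prefactor and reduce to showing $[z^0 H_i]S_{\mathsf{d}}=0$ for $\mathsf{d}\neq 0$; (ii) on the $X$-side, write $\bigl(\tfrac{\kappa_j}{z}+1\bigr)_{\mathsf{d}\cdot D_j} = \prod_{m=1}^{\mathsf{d}\cdot D_j}(1 + \tfrac{\kappa_j+(m-1)z}{z})$, no—rather expand each Pochhammer as $\prod_{m=1}^{\mathsf{d}\cdot D_j}\tfrac{mz+\kappa_j}{z} = z^{-\mathsf{d}\cdot D_j}\prod_m(mz+\kappa_j)$, multiply out, and observe the surviving $z$-power of the $H^\ell$-coefficient is $1 + \mathsf{d}\cdot K_X + e^X(\mathsf{d}) - |\ell| - (\text{nilpotent degree from numerator})$; for the $X$-side $\mathsf{d}\cdot K_X + e^X(\mathsf{d}) = 0$, so this is $1 - |\ell| - 0$ and the only way to get $z^0$ with $\ell = \mathbf{e}_i$ (the class $H_i$, $|\ell|=1$) is from the $|\ell|=1$ leading term of the expansion, which is precisely the prefactor term $\prod_i y_i^{H_i/z}$ already accounted for; all genuine curve contributions ($\mathsf{d}\neq 0$) come multiplied by strictly positive powers of $y_i$ and land in $z^{\le 0}$ with $|\ell|\ge 1$ forcing $z^{<0}$ unless a $\kappa$-numerator intervenes, which it does not improve. (iii) On the $X_D^{\mathrm{loc}}$-side, repeat verbatim: the extra $\prod_j\kappa_j(\tfrac{\kappa_j}{z}+1)_{\mathsf{d}\cdot D_j-1}$ has leading term $z^{l_D}\prod_j\kappa_j \cdot(\text{units})$, which with the shift $z^{-l_D}$ contributes $z^0$ but carries the full nilpotent factor $\prod_j\kappa_j$ of top cohomological degree $2(n_X+r_X)$, so the product with $y_i^{H_i/z}$ can only produce $z^0$-terms of cohomological degree $\ge 2(n_X+r_X) > 2$ — in particular nothing in degree $2$, hence no $z^0 H_i$. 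The main obstacle I anticipate is purely bookkeeping-level care: tracking the interaction between the fractional-exponent prefactor $\prod_i y_i^{H_i/z}$ (which must be Taylor-expanded in $\log y_i$ and contributes cohomology classes of all degrees) and the $z$-grading, and checking that the only $[z^0 H_i]$ contribution across the entire sum is the $\mathsf{d}=0$, prefactor-only term equal to $\log y_i$; once the grading conventions are fixed this is short, and I would present it in three or four lines per $I$-function rather than a long computation.
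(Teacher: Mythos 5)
Your overall strategy -- read off the mirror map as the $\cO(z^0)$-coefficient and kill every $\mathsf{d}\neq 0$ summand by tracking powers of $z$ in the hypergeometric factors -- is exactly the paper's proof, and your treatment of $I^X$ is correct: for $\mathsf{d}\neq 0$ the summand is $z^{1-e^X(\mathsf{d})}(1+\cO(1/z))$ times classes, and $e^X(\mathsf{d})=\sum_i d_i\sum_j(\mathsf{w}_X)^{(i)}_j\geq 2$, so nothing survives at $z^0$.

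Your argument for $X_D^{\rm loc}$, however, contains a genuine error. You assert that the numerator $\prod_j\kappa_j\,(\kappa_j/z+1)_{\mathsf{d}\cdot D_j-1}$ has leading term $z^{l_D}\prod_j\kappa_j$, so that after the shift $z^{-l_D}$ a $z^0$ contribution survives, and you then dispose of it by claiming $\prod_j\kappa_j$ is nilpotent of top cohomological degree. Both steps fail. First, $(\kappa_j/z+1)_{\mathsf{d}\cdot D_j-1}=\prod_{m=1}^{\mathsf{d}\cdot D_j-1}(m+\kappa_j/z)$ is $\cO(z^0)$ with leading term $(\mathsf{d}\cdot D_j-1)!$, not $\cO(z)$; there is no extra $z^{l_D}$. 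Second, and more seriously, $\kappa_j$ is the \emph{$T$-equivariant} Chern class, $\kappa_j=\lambda-\sum_i Q^X_{ij}H_i$: it is not nilpotent, and $\prod_j\kappa_j$ contains the degree-zero piece $\lambda^{l_D}$, so the cohomological-degree argument cannot exclude a contribution to the $H_i$-component. The correct (and simpler) count is that for each $j$ with $\mathsf{d}\cdot D_j>0$ the ratio $\kappa_j(\kappa_j/z+1)_{\mathsf{d}\cdot D_j-1}\big/(\kappa_j/z+1)_{\mathsf{d}\cdot D_j}$ is $\cO(z^0)$ while the prefactor contributes $z^{\mathsf{d}\cdot(K_X+D)-l_D}=z^{-l_D}$; hence the whole $\mathsf{d}\neq 0$ summand, including the overall $z$, is $\cO(z^{1-l_D})=\cO(z^{-1})$ since $l_D=|\mathsf{n}_X|+r_X\geq 2$ (equivalently, in the form \eqref{eq:IXloc}, the summand is $\cO(z^{-\sum_j\theta(e^X_j(\mathsf{d}))})$ with at least two nonzero $e^X_j$). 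With that correction the lemma follows for both $I$-functions exactly as you intend, the only $z^0H_i$ term being $\log y_i$ from the $\mathsf{d}=0$ prefactor.
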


\begin{proof}

  This is a straightforward calculation from \eqref{eq:IX} and
  \eqref{eq:IXloc}. Keeping track of the powers of $z$ in the general summands
  entails that 
$I^{X}(y,z) = z+ \sum_i \log y_i  H_i + \cO(1/z) = I^{X_D^{\rm
      loc}}(y,z)$, from which the claim follows.
\end{proof}

By the previous Lemma and \eqref{eq:pdJ}, to compute $\mathfrak{p}_\mathsf{d}^X$ we just need to evaluate the
component of the $I$-function of $X^{\rm loc}_D$ along the identity, divide by
$\lambda^{|\mathsf{n}_X|+r_X}$, and isolate the coefficient of
$\cO(z^{-|\mathsf{n}_X|-r_X})$.
We have
\bea
(I^{X^{\rm  loc}_D})^{[0]} &=& z \sum_\mathsf{d} y^\mathsf{d}
\frac{
\overset{\circ}{\prod}_j
\prod_{m_j=0}^{e_j^X(\mathsf{d})-1}
  \l(\lambda-m_jz\r)}{
  \overset{\circ}{\prod}_j
  \prod_{m_j=1}^{e_j^X(\mathsf{d})}
  \l(m_jz \r)} \nn \\
&=& z \sum_\mathsf{d} y^\mathsf{d}
\frac{
\overset{\circ}{\prod}_j
\prod_{m_j=0}^{e_j^X(\mathsf{d})-1}
  \l(\lambda-m_jz\r)}{z^{e^X(\mathsf{d})} 
  \overset{\circ}{\prod}_j (e_j^X(\mathsf{d}))!}.
\label{eq:I0}
\eea 
The numerator in the general
summand of \eqref{eq:I0} is divisible by $\lambda^{|\mathsf{n}_X|+r_X}$ (corresponding to
setting all $m_j=0$ in the product):
\beq
\overset{\circ}{\prod}_j
\prod_{m_j=0}^{e_j^X(\mathsf{d})-1}
  \l(\lambda-m_jz\r)= \lambda^{|\mathsf{n}_X|+r_X} 
  \overset{\circ}{\prod}_j
  \prod_{m_j=1}^{e_j^X(\mathsf{d})-1}
  \l(\lambda-m_jz\r),
\eeq
hence dividing by $\lambda^{|\mathsf{n}_X|+r_X}$  we get
\bea
\overset{\circ}{\prod}_j
\prod_{m_j=1}^{e_j^X(\mathsf{d})-1}
\l(\lambda-m_jz\r)
&=& (-z)^{e^X(\mathsf{d})-|\mathsf{n}_X|-r_X}
\l(
\overset{\circ}{\prod}_j
(e_j^X(\mathsf{d})-1)! +\cO(1/z)\r).
\eea
In particular this implies that 
\beq
\mathfrak{p}^X_\mathsf{d} = \bra \mathrm{[pt]} \psi^{|\mathsf{n}_X|+r_X-2} \ket_{0,1,\mathsf{d}}=
\frac{1}{\lambda^{|\mathsf{n}_X|+r_X}} \l[z^{-|\mathsf{n}_X|-r_X} y^\mathsf{d}\r]  (I^{X_{\rm
    loc}})^{[0]} =
\frac{(-1)^{e^X(\mathsf{d})-|\mathsf{n}_X|-r_X}}{\overset{\circ}{\prod}
  e^X_j(\mathsf{d})},
\eeq
proving the first part of \cref{thm:main}.

\subsubsection{Two pointed descendents}

Let us now turn to the computation of $\mathfrak{q}^X_\mathsf{d}$. We start with the following observation:
from \eqref{eq:IXloc}, we have
\beq
I^{X^{\rm loc}_D}(y,z)
:=z + \sum_{\mathsf{l}\preceq \mathsf{n}_X} \frac{1}{z^{|\mathsf{l}|-1}}
\l[ \prod_{i=1}^{r_X} \frac{\log^{l_i}y_i~
  H_i^{l_i}}{l_i!} + \cO\l(\frac{1}{z}\r)\r].
\label{eq:Ivan}
\eeq
This follows immediately from the fact that
\beq
\frac{\prod_{j=1}^{|\mathsf{n}_X|+r_X} \prod_{m_j=0}^{e_j^X(\mathsf{d})-1} \l(\lambda-m_jz -\sum_{i}
  Q^X_{ij} H_i\r)}{\prod_{j=1}^{|\mathsf{n}_X|+r_X} \prod_{m_j=1}^{e_j^X(\mathsf{d})} \l(m_jz
  +\sum_{i} Q^X_{ij}H_i\r)}= 
\cO\l( z^{-\sum_j \theta(e_j^X(\mathsf{d}))}\r),
\eeq
where $\theta(x)=0$ (resp. $\theta(x)=1$) for $x=0$ (resp. $x>0$).

From this we deduce the following Lemma. For $t \in \hhh_T(X_D^{\rm loc})$, let $\hat\star_t$
denote the big quantum cohomology product,
  \beq
  H^\mathsf{l}\hat \star_{t} H^\mathsf{m} :=
    \sum_{\mathsf{d}\in \mathrm{NE}(X)}\sum_{n \in
      \bbN}\sum_{\mathsf{i},\mathsf{k}\preceq \mathsf{n}} \bra  H^\mathsf{l},  H^\mathsf{m},
  H^\mathsf{i}, t, \dots, t \ket_{0,3+n,\mathsf{d}}^{X^{\rm loc}_D} \eta^{\mathsf{i}\mathsf{k}} H^\mathsf{k}
  \eeq
and $\star_y$ its restriction to small
quantum cohomology at $t=\sum_i \log y_i H_i$,
  \beq
  H^\mathsf{l}\star_{y} H^\mathsf{m} := \sum_{\mathsf{k}}
  c_{\mathsf{l}\mathsf{m}}^{\mathsf{k}}(y) H^\mathsf{k} :=
  \sum_{\mathsf{d}\in \mathrm{NE}(X)}\sum_{\mathsf{i},\mathsf{k}} \bra  H^\mathsf{l},  H^\mathsf{m},
  H^\mathsf{i} \ket_{0,3,\mathsf{d}}^{X^{\rm loc}_D} y^\mathsf{d} \eta^{\mathsf{i}\mathsf{k}} H^\mathsf{k}.
  \eeq
Write $t=\sum_{\mathsf{l}\preceq \mathsf{n}_X} t_{\mathsf{l}}
H^{\mathsf{l}}$. In the following we denote $\nabla_{H^\mathsf{l}} :=
\de_{t^{\mathsf{l}}}$ and, for any function $f: \hhh_T(X^{\rm loc}_D) \to \bbC(\lambda)$,
$f\big|_{\rm sqc}$ indicates its restriction to small quantum cohomology, $t \to
\sum_{i=1}^{r_X} t_i H_i$.

  \begin{lem}
\label{lem:recons}
    For $\mathsf{l}\prec \mathsf{n}_X$ we have
  \beq
  (\star_y)_{i=1}^{r_X} H_i^{\star_y l_i} = \cup_{i=1}^{r_X} H_i^{\cup l_i}=:H^\mathsf{l}.
  \eeq
  Moreover, 
  \beq
  z \nabla_{H^\mathsf{l}} J^{X^{\rm loc}_D}_{\rm big}(t,
  z)\big|_{\rm sqc}
  = \prod_i \Big(z y_i \de_{y_i} \Big)^{l_i} I^{X^{\rm loc}_D}(y, z).
  \eeq
\end{lem}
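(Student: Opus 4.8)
The plan is to prove the two assertions in turn, the first by a degree/dimension argument and the second by combining it with the mirror theorem \cref{thm:I=J}. For the first claim, that for $\mathsf{l}\prec\mathsf{n}_X$ the small quantum product $(\star_y)_{i}H_i^{\star_y l_i}$ coincides with the classical cup product $H^\mathsf{l}$, I would argue that every quantum correction term is forced to vanish for dimension reasons. Recall that the structure constants $c^{\mathsf{k}}_{\mathsf{l}\mathsf{m}}(y)$ with $\mathsf{d}\neq 0$ involve the $T$-equivariant virtual class of $\ol{\mmm}_{0,3}(X^{\rm loc}_D,\mathsf{d})$, and inserting classes $H^{\mathsf l},H^{\mathsf m},H^{\mathsf i}$ together with the anti-triangular shape of $\eta_{\mathsf{lm}}$ recorded in \eqref{eq:etaXloc}: the pairing $\eta_{\mathsf{i}\mathsf{k}}$ is nonzero only when $|\mathsf{i}|+|\mathsf{k}|=|\mathsf{n}_X|$, so the only way to produce a nonzero coefficient of $H^{\mathsf{k}}$ with $|\mathsf{k}|$ small enough to matter is to have a genuinely nonzero three-point invariant with a point-class-type insertion. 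Since a point insertion cuts the dimension of the moduli problem by $n_X$ and there is no $\psi$-insertion available to absorb the excess, such invariants vanish whenever $|\mathsf l|<|\mathsf{n}_X|$; one then checks inductively on $|\mathsf l|$ that the iterated product only picks up the classical term. Equivalently, I would deduce this directly from \eqref{eq:Ivan}: the $z^{1-|\mathsf{l}|}$ coefficient of $I^{X^{\rm loc}_D}$ has no $y$-dependence beyond the explicit $\log^{l_i}y_i$, which is exactly the statement that there are no quantum corrections in the relevant range, and then invoke \cref{thm:I=J} identifying $J_{\rm small}$ with $I$ to transport this back to a statement about $\star_y$.

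For the second identity, I would start from the general reconstruction relation between the big $J$-function and the small $I$-function. On the one hand, the standard string/divisor-type manipulation (applied in the equivariant stacky setting, valid here since $\cX^{\rm loc}_D$ is a semi-projective toric DM stack as in \cite{MR3414388}) gives that $z\nabla_{H^{\mathsf l}}J^{X^{\rm loc}_D}_{\rm big}(t,z)$ restricted to $t=\sum_i t_i H_i$ equals $H^{\mathsf l}\star_y(\text{the small }J\text{-function})$ in the appropriate Frobenius-module sense; more precisely, $z\nabla_{H^{\mathsf l}}J_{\rm big}$ is the image of $H^{\mathsf l}$ under the fundamental-solution action, and its restriction to small quantum cohomology is obtained by the quantum differential operators $(zy_i\de_{y_i})$. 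On the other hand, \cref{thm:I=J} says $J_{\rm small}^{X^{\rm loc}_D}$, evaluated at the (trivial, by the Lemma preceding) mirror map, is $I^{X^{\rm loc}_D}(y,z)$, and under this identification the Picard-rank-$r_X$ divisor insertions $H_i$ correspond precisely to the operators $zy_i\de_{y_i}$ acting on $I$. Combining the first claim, which lets me replace the quantum product $H_i^{\star_y l_i}$ by $H_i^{\cup l_i}$ and hence commute/iterate the operators freely, I would conclude
\[
z\nabla_{H^{\mathsf l}}J^{X^{\rm loc}_D}_{\rm big}(t,z)\big|_{\rm sqc}
=\prod_i\bigl(zy_i\de_{y_i}\bigr)^{l_i}J_{\rm small}^{X^{\rm loc}_D}(\tilde t(y),z)
=\prod_i\bigl(zy_i\de_{y_i}\bigr)^{l_i}I^{X^{\rm loc}_D}(y,z).
\]

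The main obstacle, I expect, is making the first step fully rigorous in the orbifold/equivariant setting: one must be careful that "dimension reasons" really do kill all $\mathsf{d}\neq 0$ contributions to $H_i^{\star_y l_i}$ in the range $\mathsf{l}\prec\mathsf{n}_X$, given that the relevant virtual dimension bookkeeping for $X^{\rm loc}_D$ (carried out in \cref{sec:loc}) involves the twist by $\bigoplus_j\shO(-D_j)$ and the restriction to the untwisted sector. The cleanest route is probably to avoid a direct moduli-space dimension count altogether and instead read everything off the explicit $I$-function: \eqref{eq:Ivan} already exhibits the $\cO(z^{1-|\mathsf l|})$ part of $I^{X^{\rm loc}_D}$ as purely classical for $|\mathsf l|\le|\mathsf{n}_X|$, the trivial-mirror-map Lemma identifies $I$ with $J_{\rm small}$ on the nose, and then the quantum-cohomology reconstruction of \cite{MR3414388} (or the general Givental formalism) forces the small quantum product of divisors to agree with cup product through this degree. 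Everything after that is the formal identification of divisor derivatives with the operators $zy_i\de_{y_i}$, which is routine.
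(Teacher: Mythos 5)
Your proposal is correct and, in the route you yourself single out as the cleanest one, coincides with the paper's proof: the paper reads the vanishing of quantum corrections to $c_{\mathsf{l}\mathsf{m}}^{\mathsf{k}}(y)$ directly off the expansion \eqref{eq:Ivan} of $I^{X^{\rm loc}_D}$ (via the trivial mirror map and \cref{thm:I=J}), and then iterates the Dubrovin-connection relation $z\nabla_{H^\mathsf{l}}\nabla_{H^\mathsf{m}}J_{\rm big}=\nabla_{H^\mathsf{l}\hat\star_t H^\mathsf{m}}J_{\rm big}$ by induction on $|\mathsf{l}|$ to get the divisor-derivative formula. Your remark that a naive moduli-space dimension count would be delicate here is well taken --- the paper indeed avoids it entirely in favour of the $I$-function argument.
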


\begin{rmk}
  This proposition is a variation of the well-known statement that for
  $\bbP^n$ the small quantum product is the same as the cup product
  for all degrees up to and excluding $n$. 
\end{rmk}

\begin{proof}
 Recall that the components of $J^{X^{\rm loc}_D}_{\rm big}(t, z)$ are a
  set of flat coordinates for the Dubrovin connection in big quantum cohomology,
  \beq
  z \nabla_{H^\mathsf{l}}\nabla_{H^\mathsf{m}} J^{X^{\rm loc}_D}_{\rm big}(t,
  z) = \nabla_{H^\mathsf{l}\hat\star_{t} H^\mathsf{m}} J^{X^{\rm loc}_D}_{\rm big}(t,
  z).
  \eeq
Write now
  %
  $(\cI)_{[k]} := [z^{-k}] \cI$ for any Laurent series $\cI \in \bbC((z))$ and 
  %
suppose $|\mathsf{l}|=|\mathsf{m}|=1$. We have that
  \beq
  \nabla_{H^\mathsf{l}}\nabla_{H^\mathsf{m}} \big(J^{X^{\rm loc}_D}_{\rm
    big}(t)\big)_{[s]}\bigg|_{\rm sqc} = \nabla_{H^\mathsf{l}\star_y
    H^\mathsf{m}} (J^{X^{\rm loc}_D}_{\rm big}(t))_{[s-1]} =   \sum_{\mathsf{k}}
  c_{\mathsf{l}\mathsf{m}}^{\mathsf{k}}(y) \nabla_{\mathsf{k}}  \big(J^{X^{\rm loc}_D}_{\rm
    big}(t)\big)_{[s-1]}\bigg|_{\rm sqc}.
  \eeq
  Now,
  \beq
  c_{\mathsf{l}\mathsf{m}}^{\mathsf{k}}(y)=(y_l \de_{y_l}) (y_m \de_{y_m}) (J^{X^{\rm loc}_D}_{\rm sm}(y))^{[\mathsf{k}]}_{[1]}=\delta_{\mathsf{l}+\mathsf{m}}^{\mathsf{k}}  =c_{\mathsf{l}\mathsf{m}}^{\mathsf{k}}(0)
  \eeq
  from \eqref{eq:Ivan} and the fact that $(J^{X^{\rm loc}_D}_{\rm
    big})^{[\mathsf{k}]}$ is the $[\mathsf{k}]$-component of the gradient of
  the genus-0 Gromov-Witten potential. Then,
  \bea
  (y_l \de_{y_l}) (y_m \de_{y_m}) I^{X^{\rm loc}_D}_{[s]}(y) &=& \nabla_{H^\mathsf{l}}\nabla_{H^\mathsf{m}} \big(J^{X^{\rm loc}_D}_{\rm
    big}(t)\big)_{[s]}\bigg|_{\rm sqc} = \sum_{\mathsf{k}}
  c_{\mathsf{l}\mathsf{m}}^{\mathsf{k}}(0) \nabla_{\mathsf{k}}  \big(J^{X^{\rm loc}_D}_{\rm
    big}(t)\big)_{[s-1]}\bigg|_{\rm sqc} \nn \\ &=& \nabla_{H^\mathsf{l+m}}  \big(J^{X^{\rm loc}_D}_{\rm
    big}(t)\big)_{[s-1]}\bigg|_{\rm sqc}.
  \label{eq:dIstep1}
  \eea
  Now, for $|\mathsf{m}|=1$ and by induction on $1\leq |\mathsf{l}|<
  |\mathsf{n}_X|$ we have, from \eqref{eq:Ivan}, that
  %
  \beq
  c_{\mathsf{l}\mathsf{m}}^{\mathsf{k}}(y)
  =  \nabla_{H^{\mathsf{l}}}\nabla_{H^{\mathsf{m}}}  \big(J^{X^{\rm loc}_D}_{\rm
      big}(t)\big)^{[\mathsf{k}]}_{[1]}\bigg|_{\rm sqc}
= \l( \prod_i y_{l_i}  \de_{y_{l_i}}\r) (y_{m} \de_{y_{m}}) \big(I^{X^{\rm loc}_D}(y)\big)_{|\mathsf{l}|}^{[\mathsf{k}]}=\delta^{\mathsf{k}}_{\mathsf{l+m}}=c_{\mathsf{l}\mathsf{m}}^{\mathsf{k}}(0),
  \eeq
 and for $s\geq |\mathsf{l}|$,
  \bea
  \l( \prod_i y_{l_i}  \de_{y_{l_i}}\r) (y_{m} \de_{y_{m}}) \big(I^{X^{\rm loc}_D}(y)\big)_{[s]}
      &=&
      \nabla_{H^\mathsf{l}}\nabla_{H^\mathsf{m}} \big(J^{X^{\rm loc}_D}_{\rm
    big}(t)\big)_{[s-|\mathsf{l}|]}\bigg|_{\rm sqc} \nn \\ &=& \sum_{\mathsf{k}}
  c_{\mathsf{l}\mathsf{m}}^{\mathsf{k}}(0) \nabla_{\mathsf{k}}  \big(J^{X^{\rm loc}_D}_{\rm
    big}(t)\big)_{[s-|\mathsf{l}|+1]}\bigg|_{\rm sqc} \nn \\ &=& \nabla_{H^{\mathsf{l}+\mathsf{m}}}  \big(J^{X^{\rm loc}_D}_{\rm
    big}(t)\big)_{[s-|\mathsf{l}|+1]}\bigg|_{\rm sqc}.
  \label{eq:dIstep2}
  \eea

\end{proof}

\begin{cor}
  We have
  \beq
  \mathfrak{q}^X_\mathsf{d}= \prod_i \big{|} G_i \big{|} \, \l(\prod_{i,j} \big(\mathsf{w}_X\big)^{(i)}_j\r) \mathsf{d}^{\mathsf{n}_X}\mathfrak{p}^X_\mathsf{d} .
  \eeq
\end{cor}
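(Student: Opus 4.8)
The plan is to read off $\mathfrak{q}^X_{\mathsf{d}}$ from the unit component of $\nabla_{H^{\mathsf{n}_X}}J^{X^{\rm loc}_D}_{\rm big}$ restricted to small quantum cohomology, and to identify it with the analogous component of $I^{X^{\rm loc}_D}=J^{X^{\rm loc}_D}_{\rm small}$ (\cref{thm:I=J}, the mirror map being trivial) out of which $\mathfrak{p}^X_{\mathsf{d}}$ was extracted above. Write $c:=\prod_{i=1}^{r_X}\big{|}G_i\big{|}\,\big(\prod_{i,j}(\mathsf{w}_X)^{(i)}_j\big)$, so $[\mathrm{pt}]=c\,H^{\mathsf{n}_X}$ by \eqref{eq:ptHnx}, and recall from \cref{sec:proof_loc} that the pairing $\eta$ is anti-diagonal; in particular $\eta^{\mathsf{n}_X\mathsf{m}}=0$ for $\mathsf{m}\neq 0$. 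Expanding $J^{X^{\rm loc}_D}_{\rm big}$, differentiating in the $H^{\mathsf{n}_X}$-direction, restricting to $t=\sum_i t_iH_i$ and projecting to the unit, the divisor equation collapses the $t_iH_i$-insertions to $y^{\mathsf{d}}$ \emph{without correction terms}, because the only descendant insertion is $H^{\mathsf{n}_X}/(z-\psi)$ and $H_i\cup H^{\mathsf{n}_X}=0$; what remains is $\eta^{\mathsf{n}_X 0}\sum_{\mathsf{d}}y^{\mathsf{d}}\langle H^{\mathsf{n}_X},H^{\mathsf{n}_X}/(z-\psi)\rangle^{X^{\rm loc}_D}_{0,2,\mathsf{d}}$, and by a dimension count the two-pointed invariant sits in a single power of $z$, namely $z^{-r_X}\langle H^{\mathsf{n}_X},H^{\mathsf{n}_X}\psi^{r_X-1}\rangle^{X^{\rm loc}_D}_{0,2,\mathsf{d}}=z^{-r_X}c^{-2}\mathfrak{q}^X_{\mathsf{d}}$. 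Running the same computation for $\nabla_{H^{\mathsf{k}}}J^{X^{\rm loc}_D}_{\rm big}|_{\rm sqc}$ shows its unit component is supported in the single power $z^{-(|\mathsf{n}_X|+r_X-|\mathsf{k}|)}$ for each fixed $\mathsf{d}$.

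The heart of the argument is the computation of $\nabla_{H^{\mathsf{n}_X}}J^{X^{\rm loc}_D}_{\rm big}|_{\rm sqc}$. The index $\mathsf{n}_X$ lies just outside the range $\mathsf{l}\prec\mathsf{n}_X$ of \cref{lem:recons}, and this really matters: the small and big quantum products genuinely differ there, as $H^{n-1}\star H=H^n+q$ already for $\mathbb{P}^n$. However $\mathsf{n}_X-\mathbf{1}\prec\mathsf{n}_X$ since each $n_i\geq 1$, so \cref{lem:recons} computes $\nabla_{H^{\mathsf{n}_X-\mathbf{1}}}J_{\rm big}|_{\rm sqc}$; applying the $r_X$ commuting divisor derivatives $\nabla_{H_1},\dots,\nabla_{H_{r_X}}$ — which commute with restriction to sqc and act there as $y_i\partial_{y_i}$ — turns this into $z^{-r_X-1}\prod_i(zy_i\partial_{y_i})^{n_i}I^{X^{\rm loc}_D}$. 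On the other hand, iterating the Dubrovin flatness $z\nabla_A\nabla_B J_{\rm big}=\nabla_{A\,\hat\star_t\,B}J_{\rm big}$ and restricting to sqc expresses the same quantity through $\nabla_{H^{\mathsf{n}_X-\mathbf{1}}\star_y H_1\star_y\cdots\star_y H_{r_X}}J_{\rm big}|_{\rm sqc}$ together with terms of the shape $(z\text{-free, positively }y\text{-graded})\cdot\nabla_{H^{\mathsf{k}}}J_{\rm big}|_{\rm sqc}$ with $|\mathsf{k}|<|\mathsf{n}_X|$ (the structure-constant $\partial_{t_i}$-derivatives produced along the way by WDVV are again of this type). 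Since the degree-$|\mathsf{n}_X|$ part of the iterated quantum product is purely classical, equal to $H^{\mathsf{n}_X}$, by the grading of the Frobenius structure, one arrives at
\[
z^{-1}\prod_i(zy_i\partial_{y_i})^{n_i}I^{X^{\rm loc}_D}
=\nabla_{H^{\mathsf{n}_X}}J^{X^{\rm loc}_D}_{\rm big}\big|_{\rm sqc}
+\sum_{|\mathsf{k}|<|\mathsf{n}_X|}(\text{correction}_{\mathsf{k}})\cdot\nabla_{H^{\mathsf{k}}}J^{X^{\rm loc}_D}_{\rm big}\big|_{\rm sqc}.
\]

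Finally I would take unit components and isolate the $z^{-r_X}y^{\mathsf{d}}$-coefficient. Every correction term vanishes there: it is $z$-free times a unit component supported at $z^{-(|\mathsf{n}_X|+r_X-|\mathsf{k}|)}$ with $|\mathsf{k}|<|\mathsf{n}_X|$, hence off the power $z^{-r_X}$. So the left-hand side alone computes $\eta^{\mathsf{n}_X 0}z^{-r_X}c^{-2}\mathfrak{q}^X_{\mathsf{d}}$; and since $\prod_i(zy_i\partial_{y_i})^{n_i}$ multiplies the $y^{\mathsf{d}}$-Fourier mode of $(I^{X^{\rm loc}_D})^{[0]}$ by $z^{|\mathsf{n}_X|}\mathsf{d}^{\mathsf{n}_X}$, comparison with the closed form \eqref{eq:I0} of $(I^{X^{\rm loc}_D})^{[0]}$ and with the extraction of $\mathfrak{p}^X_{\mathsf{d}}$ carried out just above yields $\mathfrak{q}^X_{\mathsf{d}}=c\,\mathsf{d}^{\mathsf{n}_X}\mathfrak{p}^X_{\mathsf{d}}$, with the powers of $\lambda$ and the factor $c$ on the two sides matching as in that computation. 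The only non-routine ingredient is the middle step: because $[\mathrm{pt}]\propto H^{\mathsf{n}_X}$ lives at the very top of $H^\bullet(X)$, the $J$-function derivative we need escapes \cref{lem:recons} exactly where quantum corrections appear, and one must verify — via the dimension count of the first step — that none of these corrections reaches the single Laurent coefficient $z^{-r_X}y^{\mathsf{d}}$ that carries $\mathfrak{q}^X_{\mathsf{d}}$.
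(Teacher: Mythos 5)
Your proposal is correct and takes essentially the same route as the paper: the paper's proof likewise reads $\mathfrak{q}^X_{\mathsf{d}}$ off the unit component of $\nabla_{H^{\mathsf{n}_X}}J^{X^{\rm loc}_D}_{\rm big}\big|_{\rm sqc}$ and identifies the latter with $\prod_i(y_i\partial_{y_i})^{n_i}$ applied to $I^{X^{\rm loc}_D}$, the boundary case $\mathsf{l}=\mathsf{n}_X$ being supplied by the last step of the induction in the proof of \cref{lem:recons} (where $c^{\mathsf{k}}_{\mathsf{l}\mathsf{m}}(y)=\delta^{\mathsf{k}}_{\mathsf{l}+\mathsf{m}}$ still holds for $|\mathsf{l}|=|\mathsf{n}_X|-1$, $|\mathsf{m}|=1$, so your correction terms are in fact absent --- and your motivating example is off, since $H^{n-1}\star H=H^{n}$ with no quantum correction for $\mathbb{P}^n$). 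Your dimension-count argument showing that any such corrections would miss the $z^{-r_X}y^{\mathsf{d}}$ coefficient is a valid, if redundant, safeguard for what the paper leaves implicit.
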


\begin{proof}
From the previous Lemma we have, in particular, that
  \bea
  \nabla_{H^{\mathsf{n}_X}}  \big(J^{X^{\rm loc}_D}_{\rm
    big}(t)\big)_{[s]}\bigg|_{\rm sqc} = \l( \prod_i \Big(y_i \de_{y_i} \Big)^{n_i}\r)
\big(I^{X^{\rm loc}_D}(y)\big)_{[s+|\mathsf{n}_X|-1]}.
  \label{eq:dIfin}
  \eea
From \eqref{eq:qd} and \eqref{eq:dIfin} we have that
  \bea
  \mathfrak{q}^X_\mathsf{d} &=& [y^\mathsf{d}] \eta_{\mathsf{n}_X0} \nabla_{H^{\mathsf{n}_X}}  \big(J^{X^{\rm loc}_D}_{\rm
    big}(t)\big)^{[0]}_{[r_X+1]}\bigg|_{\rm sqc} \nn \\ &=&  \frac{ \prod_i |G_i| \, \prod_{i,j}
  \big(\mathsf{w}_X\big)^{(i)}_j}{ \lambda^{|\mathsf{n}_x|+r_X}} [y^\mathsf{d}] \prod_i \Big(y_i \de_{y_i} \Big)^{n_i}
  \big(I^{X^{\rm loc}_D}(y)\big)^{[0]}_{[|\mathsf{n}_X |+r_X]} \nn \\
  &=& \prod_i |G_i| \, \prod_{i,j} \big(\mathsf{w}_X\big)^{(i)}_j \prod_i d_i^{n_i} \, \mathfrak{p}^X_\mathsf{d} \, ,
  \eea
  concluding the proof.
\end{proof}

\begin{rmk}\label{remark}
The statement of \cref{lem:recons} also immediately reconstructs
explicitly two-point
descendent invariants where the powers of $\psi$-classes are distributed among
the two marked points by standard structure results about $g=0$ Gromov-Witten theory (namely the symplecticity of
the $S$-matrix, which is a consequence of WDVV and the string equation: this is
\cite[Lemma~17]{lee2004frobenius}). Their agreement with the corresponding log invariants is an easy
exercise left to the reader.
\end{rmk}

\bibliography{miabiblio.bib}
\end{document}